\newtheorem{theorem}{Theorem}[section]
\newtheorem{lemma}{Lemma}[section]
\newtheorem{definition}{Definition}[section]
\newtheorem{proposition}[theorem]{Proposition}
\def\dint{\displaystyle \int}
\newcommand{\proof}{{\bf Proof:} }
\newcommand{\cqd}{ \hfill $\Box$ }
\begin{document}
\begin{center}
{\Large Necessary Conditions for Optimal Control of SPDE with locally monotone coefficients} 
\\
\end{center} 

\vspace{0.3cm}

\begin{center}
{\large  Edson A. Coayla-Teran} \\
\textit{Universidade Federal da Bahia-UFBA\\
Av. Ademar de Barros s/n, Instituto de Matem\' atica, Salvador, BA, Brasil, CEP 40170-110, e-mail: coayla@ufba.br\\
Phone: 0051-071-32836279, Fax: 0051-071-32836276}
\end{center}
\begin{abstract}
 The aim of this paper is to derive a maximum principle  for a control problem governed by a stochastic partial differential equation (SPDE) with locally monotone coefficients. In particular, necessary conditions for optimality  for this stochastic optimal control problem are obtained by using the adjoint backward stochastic partial differential equation (BSPDE).  
\end{abstract}
{\bf Keywords}: Necessary conditions for optimality, Optimal stochastic control, Stochastic partial differential equation.

{\bf AMS Subject Classification 2010}: 49B99; 93E20; 60H15.
\section{Introduction} Let $T>0$ some fixed time, $\mathcal{O}$ and $H$ two real separable Hilbert spaces and $V$ a reflexive Banach space with $H'$ and $V'$ the dual spaces of $H$ and $V$ respectively. Consider the following initial value problem involving a controlled SPDE: 
\begin{equation}\label{coeq1}
du(t)=(A(t)u(t)+B(u(t),\Phi(t)))dt+\Xi(u(t))dW(t),\,  u(0)=u_0.
\end{equation}
where $B:V\times\mathcal{O}\times\Omega\rightarrow V'$ and $\Xi: H\times\Omega\rightarrow L_2(U;H)$ are measurable maps and the map $\Phi:[0,T]\times\Omega\rightarrow\mathcal{O}$  is progressively measurable which will play the role of the control, $B$ satisfies a locally monotone condition (see condition {\bf A2} below) $A:[0,T]\times V\times\Omega\rightarrow V'$ is  progressively measurable such that $A(t,\cdot,\omega)$  with $(t,\omega)\in[0,T]\times\Omega$ is a linear operator, $\left\{W(t)\right\}_{t\geq0}$ be a cylindrical Wiener process. For more details about the assumptions on the coefficients see the Section 2.\\
\indent The typical optimal control problem is the minimization of  $\mathcal{J}(\Phi)$ where $\mathcal{J}$ is a cost functional (in our case it is given by (\ref{cost})) with $\Phi$ belonging to  $\mathcal{U},$ the set $\mathcal{U}$ will denote the  admissible controls associated with the controlled initial value problem (\ref{coeq1}). The present work deals with deriving the maximum principle (or  Pontryagin's maximum principle) for that control problem, more specifically, we will provide necessary conditions for optimality for that optimal control problem. To attain our objective we use the adjoint backward stochastic partial differential equation (\ref{BSE}) associated with (\ref{coeq1}). This argument is well known (see  \cite{AH3} for example), combining this with some ideias from \cite{AH2} and \cite{HL} we get necessary conditions for optimality.\\
\indent It is important to remark that the use of  backward stochastic differential equations (BSDEs) for deriving the maximum principle for forward controlled stochastic equations was first discussed by Bismut \cite{B}. Several authors showed the relation between BSDEs and the maximum principle for stochastic differential equations, see \cite{AH2}, \cite{AH3}, \cite{O}, \cite{P}, and \cite{YZ} for example.\\
\indent It is important to mention that the works \cite{AH2} , \cite{AH3} and  \cite{FT} use BSDEs to derive the maximum principle in infinite dimension but the results of these papers cannot be applied in the study of the equation in (\ref{coeq1}) because they assume that the derivative of the map $B(\cdot,\cdot)$ in  (\ref{coeq1}) with respect to the variable $u$ and with respect to the variable $\Phi$ is uniformly bounded (which implies that the map $B(\cdot,\cdot)$ is Lipschitz continuous), whereas in the present work we assume that the map $B(\cdot,\cdot)$ is locally monotone and its derivative satisfies {\bf A8} and {\bf A9}.\\
\indent Another important question in the control theory is to study the existence of optimal control, we do not include it in this paper. This question was studied in \cite{CMF} where the existence of optimal control was demonstrated under the assumption that the map $B(\cdot,\cdot)$ satisfies a locally monotone condition and additional conditions.\\ 
\indent The present article is organized in the following way: in Section 2, we present the basic spaces, the norms, properties and notations which we are going to work with in the subsequent sections. In Section 3, we formulate the optimal control problem and provide estimates to linear equation associated with the equation (\ref{coeq1}). The Section 4 is devoted to study the Adjoint Equation, we demonstrate the existence and uniqueness of a solution and some useful estimates which we will use in the next section.The Section 5 is dedicated to state the goal of the present work: Necessary  Conditions for Optimal Control.
\section{Notation}
Let $H$ be a real separable Hilbert space.  Let $V$ be a 
reflexive Banach space. Identify $H$ with its dual $H'$ and denote the dual of $V$ by $V'$. Let
$$
V\subset H\cong H'\subset V'
$$
where the inclusions are assumed to be dense and compact. The triad $(H,V,V')$ is 
known as a \textit{Gelfand triple}. We will denote by $\|\cdot\|_V,$ $\|\cdot\|,$ $\|\cdot\|_{V'}$ the norms in $V,$ $H,$ and $V'$ respectively. The inner product in $H$ and the duality scalar product between $V$ and $V'$ will be denoted by $(\cdot,\cdot)$ and $\langle\cdot,\cdot\rangle$ respectively. Furthermore we will denote by $\|\cdot\|_{\mathcal{O}}$ the norm in $\mathcal{O}$ and by $L_2(U;H)$  the space of all Hilbert-Schmidt operators from $U$ to $H$ with inner product  $\langle\cdot,\cdot\rangle_2$ and norm $\|\cdot\|_2$, for $U$ a real separable Hilbert space.\\
\indent Let $(\Omega,\mathcal{F},\mathbb{P})$ be a complete probability space. Let $\left\{W_t\right\}_{t\geq0}$ be a $R-$cylindrical Wiener process on $U$ with its completed natural filtration $\mathcal{F}_t,$ with $t\geq0.$\\
\indent To simplify notation we use the letter $\mathbb{T}$ for the interval $[0,T].$ $\mathbb{E}(X)$ denotes the mathematical expectation of the random variable $X.$ We abbreviate ``\textit{almost surely} $\omega\in\Omega.$" to  a.s.\\
\indent Let $B$ be a Banach space with norm $\|\cdot\|_B$ and let $\mathcal{B}(B)$ denote the Borel $\sigma-$algebra of $B$. The space $L^2(\Omega\times\mathbb{T};B)$ is the set of all $\mathcal{F}\otimes\mathcal{B}(\mathbb{T})-$measurable processes $u:\Omega\times\mathbb{T}\rightarrow B$ which are $\mathcal{F}_t-$ adapted and $\mathbb{E}(\int_{\mathbb{T}}\|u\|^2_Bdt)<\infty.$ The constant $c_{HV}$ is such that $\|v\|^2\leq c_{HV}\|v\|^2_V$ for all $v\in V$.\\
\indent In order to get solutions to (\ref{coeq1}), we state the following conditions on the coefficients:
Suppose there exist positive constants  $\theta,$ $K$ and a positive adapted process $f\in L^1(\Omega\times\mathbb{T};\mathbb{R})$ such that the following conditions hold for all $v,$ $v_1,$ $v_2\in V,$ $\Phi\in\mathcal{O}$ and a.e. $(t,\omega)\in\mathbb{T}\times\Omega$.
\begin{enumerate}
\item[{\bf A1})]  (Hemicontinuity) The map $s\rightarrow\langle B(v_1+sv_2,\Phi),v\rangle+\langle A(v_1+sv_2,v\rangle$ is continuous on $\mathbb{R}.$
\item[{\bf A2})] (Local monotonicity)
$$\begin{array}{rl}
2\langle B(v_1,\Phi)-B(v_2,\Phi),v_1-v_2\rangle&+\|\Xi(v_1)-\Xi(v_2)\|_2^2\leq\\
&(K+\rho(v_2))\|v_1-v_2\|^2,
\end{array}
$$
where $\rho: V\rightarrow[0,+\infty)$ is a mensurable function and locally bounded in $V.$
\item[{\bf A3})] (Coercivity) 
$$
2\langle A(t)v,v\rangle\leq-\theta\|v\|_V^2
$$
and
$$ 
2\langle B(v,\Psi),v\rangle+\|\Xi(v)\|_2^2\leq K\|v\|^2+f(t).
$$
\item[{\bf A4})] (Growth) 
$$
\|A(t)v\|^2_{V'}+\|B(v,\Psi)\|^2_{V'}+\leq (f(t)+K\|v||^2_V)(1+\|v\|^2).
$$
\end{enumerate}
In this work, we understand that the stochastic process $u_{\Phi}$ is a solution to 
the problem in (\ref{coeq1}) in the following sense.
\begin{definition} Let $u_0$ be a  random variable which does not depend on $W(t).$ The 
stochastic process $(u_{\Phi}(t))_{t\in\mathbb{T}}\in L^2(\Omega\times\mathbb{T};V),$ $\mathcal{F}_t-$ adapted, with a.s. sample paths continuous in $H$, is a solution to (\ref{coeq1}) if it satisfies the equation:
\begin{equation}\label{iska}\begin{array}{rl}
(u_{\Phi}(t),v)\!=&(u_0,v)+\!\dint_0^t\left\langle B(u_{\Phi}(s),\Phi(s)),v\right\rangle ds+\dint_0^t(A(s)u_{\Phi}(s)),v)ds+\\
&+\dint_0^t(v,\Xi(u_{\Phi}(s))dW(s))
\end{array}
\end{equation}
a.s. for all $v\in V$ and $t\in\mathbb{T}.$
\end{definition}
\indent {\bf Uniqueness} means indistinguishability.\\
We need the following existence of solutions theorem, the result is a particular case of \cite[Th. ~1.1]{LR} .
\begin{theorem}\label{exsth} Let $u_0\in L^4(\Omega, V)$. Suppose that {\bf A1}-{\bf A4} is satisfied and there is a constant $C$ such that
\begin{equation}\begin{array}{rl}
\|\Xi(v)\|^2_2\leq& C(f(t)+\|v\|^2), \ \ \ v\in V;\\
\rho(v)\leq &C(1+\|v\|_V^2)(1+\|v\|^2)\ \ \ v\in V.
\end{array}
\end{equation}
The problem (\ref{iska}) has a unique solution $u_{\Phi}$ which  has a.s. sample paths continuous in $H$ and satisfies
\begin{equation}\label{estiska}
\mathbb{E}\left[\sup_{t\in\mathbb{T}}\|u_{\Phi}(t)\|^2+\int_0^T\|u_{\Phi}(t)\|^2_Vdt\|\right]\leq C,
\end{equation} 
where $C$ is a constant dependent of $T$ and $u_0.$
\end{theorem}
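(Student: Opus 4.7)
The plan is to reduce the statement to the cited result \cite[Th.~1.1]{LR} by repackaging the drift so that $\Phi$ is absorbed into a single variational operator. Define
\[
\widetilde{A}(t,v,\omega):=A(t,\omega)v+B(v,\Phi(t,\omega),\omega),\qquad (t,v,\omega)\in\mathbb{T}\times V\times\Omega,
\]
and keep $\Xi$ as the diffusion. Since $\Phi$ is progressively measurable with values in $\mathcal{O}$ and $B$ is measurable in all its arguments, $\widetilde{A}$ is a progressively measurable map from $\mathbb{T}\times V\times\Omega$ into $V'$. Once the standard variational hypotheses of \cite{LR} are verified for the pair $(\widetilde{A},\Xi)$, the theorem in \cite{LR} provides both the existence/uniqueness of an $\mathcal{F}_t$-adapted solution in $L^2(\Omega\times\mathbb{T};V)$ with continuous $H$-valued sample paths, and the energy estimate (\ref{estiska}).

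First I would check hemicontinuity of $\widetilde{A}$: this is immediate from {\bf A1} since $\langle \widetilde{A}(t,v_1+sv_2)-\widetilde{A}(t,v_1+s_0v_2),v\rangle$ is exactly the increment that {\bf A1} controls. Next, local monotonicity for $\widetilde{A}$: adding the coercivity inequality $2\langle A(t)(v_1-v_2),v_1-v_2\rangle\leq -\theta\|v_1-v_2\|_V^2\leq 0$ from {\bf A3} to the inequality in {\bf A2} yields
\[
2\langle \widetilde{A}(t,v_1)-\widetilde{A}(t,v_2),v_1-v_2\rangle+\|\Xi(v_1)-\Xi(v_2)\|_2^2\leq (K+\rho(v_2))\|v_1-v_2\|^2,
\]
which is precisely the local monotonicity hypothesis required in \cite{LR}, and the hypothesized growth $\rho(v)\leq C(1+\|v\|_V^2)(1+\|v\|^2)$ matches the admissible rate for $\rho$ there. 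Coercivity of $\widetilde{A}$ combined with the bound on $\|\Xi(v)\|_2^2$ follows by summing the two inequalities in {\bf A3}, producing the required $-\theta\|v\|_V^2+K\|v\|^2+f(t)$ on the right-hand side. Finally, the growth bound in {\bf A4} together with the assumed bound on $\|\Xi(v)\|_2^2$ yields the polynomial growth of $\widetilde{A}$ in $V'$ demanded by \cite{LR}.

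With the four structural conditions verified, \cite[Th.~1.1]{LR} applies and produces a unique $\mathcal{F}_t$-adapted solution $u_\Phi$ with almost surely continuous paths in $H$, i.e.\ a solution of (\ref{iska}) in our sense. The same theorem yields the a priori bound (\ref{estiska}); the hypothesis $u_0\in L^4(\Omega,V)$ ensures the higher integrability of the initial condition needed for the $\sup_t$ estimate inside the expectation via the Burkholder--Davis--Gundy inequality applied to the stochastic integral term in the Itô expansion of $\|u_\Phi(t)\|^2$, with the constant $C$ depending only on $T$, $K$, $\theta$, $\|f\|_{L^1(\Omega\times\mathbb{T})}$ and $\mathbb{E}\|u_0\|_V^4$.

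I do not anticipate a conceptual obstacle: the statement is designed as a direct specialization of \cite[Th.~1.1]{LR}, and the only genuine verification is checking that the progressively measurable control $\Phi$ does not break the measurability or structural estimates, which it does not, since all bounds in {\bf A2}--{\bf A4} are uniform in the $\mathcal{O}$-variable. The closest thing to a subtle point is ensuring that the growth restriction on $\rho$ stated in the theorem is the correct one demanded by \cite{LR}; this is exactly what the quoted inequality $\rho(v)\leq C(1+\|v\|_V^2)(1+\|v\|^2)$ is designed to guarantee.
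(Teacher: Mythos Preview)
Your proposal is correct and follows exactly the same approach as the paper, which simply invokes \cite[Th.~1.1]{LR} after noting that the assumptions {\bf A1}--{\bf A4} together with the stated bounds on $\Xi$ and $\rho$ place the equation in the variational framework of \cite{LR}. Your write-up merely makes explicit the verification that the combined drift $\widetilde{A}=A+B(\cdot,\Phi)$ satisfies the four structural hypotheses of \cite{LR}, which the paper leaves implicit in a single sentence.
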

\begin{proof} The proof follows from theorem 1.1 of \cite{LR} because our assumptions on coefficients of the equation in (\ref{coeq1}) satisfies the conditions of that theorem.  
\end{proof}\cqd
\section{Formulation of the Control Problem and Estimates}
Let $\mathcal{O}_1$ be a fixed convex subset of $\mathcal{O}.$ The map $\Phi\in L^2(\Omega\times\mathbb{T};\mathcal{O})$ such that $\Phi(t)\in \mathcal{O}_1$ a.e., a.s. is called \textit{admissible control}. The set of admissible controls will be denoted by $\mathcal{U}.$
Let us now define the {\it cost functional}:
\begin{equation}\label{cost}
\mathcal{J}(\Phi):=\mathbb{E}\left[\dint_0^T\Bigl(\mathcal{L}(u_{\Phi}(s),\Phi(s))\Bigr)ds+(\mathcal{K}(u_{\Phi}(T)))\right],\ \Phi\in\mathcal{U}
\end{equation}
whenever the integral in (\ref{cost}) exists and is finite. We will assume that the mappings $\mathcal{L}:H\times\mathcal{O}\rightarrow\mathbb{R}_+$ and $\mathcal{K}:H\rightarrow\mathbb{R}_+$ are measurable and satisfy the following conditions:
\begin{enumerate}
\item[{\bf H1})] we assume that $\mathcal{L},$ $\mathcal{K}$ satisfies
$$\begin{array}{rl}|\mathcal{L}(u_1,\Phi_1)-\mathcal{L}(u_2,\Phi_2)|&\leq C_{\mathcal{L}}(\|u_1-u_2\|^2+\|\Phi_1-\Phi_2\|^2_{\mathcal{O}}),\\
|\mathcal{K}(u_1)-&\mathcal{K}(u_2)|\leq C_{\mathcal{K}}\|u_1-u_2\|^2,\end{array}$$
for all  $u_1,\ \  u_2\in H,$ $\Phi_1,\ \ \Phi_2\in\mathcal{O}$ and $C_{\mathcal{L}},$ $C_{\mathcal{K}}$ are positive constants.
\item[{\bf H}2)]  the mappings $\mathcal{L}(\cdot,\cdot)$ and $\mathcal{K}(\cdot)$ are Fr\'echet differentiable and their derivatives $\mathcal{L}_u(\cdot,\cdot),$ $\mathcal{L}_\Phi(\cdot,\cdot)$ and $\mathcal{K}'(\cdot)$ satisfies
$$\begin{array}{rl}
\|\mathcal{L}_u(u,\Phi)\|&\leq k_{\mathcal{L}}\|u\|,\ \mathcal{L}_u(0,\Phi)=0,\ \|\mathcal{L}_\Phi(u,\Phi)\|\leq k_{\mathcal{L}}\|\Phi\|_{\mathcal{O}} \textmd{ and}\\
\|\mathcal{K}'(u)\|&\leq k_{\mathcal{K}}\|u\|
\end{array}
$$
for all $\Phi\in\mathcal{O},$ $u\in H$ and $k_{\mathcal{L}},$ $k_{\mathcal{K}}$ positive constants.
\end{enumerate}
\indent We will assume the following additional conditions for the equation (\ref{coeq1}) which are useful to demonstrate the existence and uniqueness of the solution to linear equation (\ref{linequ}). Let $u,\ v,\ v_1,\ v_2$ be in $V$ and $\Phi,\,\Phi_1\in\mathcal{O}$, suppose that there are positive constants $\theta_1$ $K_2,$ $K_3$, $K_4$, $K_5$ and $K_6$ such that:
\begin{enumerate}
\item[{\bf A5})] 
The mapping $\Xi(\cdot)$ is Fr\'echet differentiable and its derivative $\Xi'(u)\in L^2(H,L_2(U;H)).$
\item[{\bf A6})] $B:V\times\mathcal{O}\rightarrow V',$ is a mapping such that the map $B(\cdot,\Phi)$ is Fr\'echet differentiable and we denote by $B_u(u,\Phi)$ its Fr\'echet derivative at the point $u.$
\item[{\bf A7})] For  $u\in V$ fixed, we have 
$$\begin{array}{rl}
\langle B_u(u,\Phi)(v),v_1-v_2\rangle\leq K_2\|v_1-v_2\|_V\|v\|\rho_1(u),
\end{array}
$$
where $\rho_1: H\rightarrow[0,+\infty)$ is a measureable function and locally bounded in $H.$
\item[{\bf A8})] 
$
\langle (B_u(u,\Phi)-B_u(v,\Phi_1))(w),z\rangle \leq K_3\|z\|_V\rho_1(u-v)\|w\|
$
\item[{\bf A9})] 
$
2\langle B_u(v_1,\Phi)(v),v\rangle+2\|\Xi_u(v_1)(v)\|_2^2\leq\theta_1(\rho_1(v_1))^2\|v\|^2.
$
\item[{\bf A10})]
$
\|A(t)(v)\|_{V'}^2+\|B_u(u,\Phi)(v)\|^2_{V'}+\|\Xi_u(v)\|^2_{V'}\leq (1+K_4\|v\|^2_V)(1+\|u\|^2).
$
\item[{\bf A11})] $B:V\times\mathcal{O} \times\Omega\rightarrow V',$ is a mapping such for each $u\in V$ the mapping $\Phi\in \mathcal{O}\rightarrow B(u,\Phi)\in V'$ is Fr\'echet differentiable and we denote by $B_{\Phi}(u,\Phi)$ its Fr\'echet derivative at the point $\Phi$ which  satisfies:
$$
\|B_\Phi(u,\Phi)(\Upsilon)-B_\Phi(v,\Phi)(\Upsilon_1)\|_{V'}\leq K_5\|\Upsilon-\Upsilon_1\|_{\mathcal{O}}
$$
for all  $\Upsilon,\ \ \Upsilon_1$ $\in\mathcal{O}.$ 
\item[{\bf A12})]  $\|\Xi'(v)(w)-\Xi'(u)(w)\|_2 \leq  K_6\|w\|,$\ \  $\Xi'(0)=0.$
\end{enumerate}
\indent Our control problem is to minimize (\ref{cost}) over $\mathcal{U}.$ When $\Phi^{\star}\in\mathcal{U}$ satisfies
\begin{equation}\label{minizacost}
\mathcal{J}(\Phi^{\star})=\inf_{\Phi}\mathcal{J}(\Phi)
\end{equation}
is called an \textit{optimal control}. The corresponding solution $u_{\Phi^{\star}}$ of (\ref{coeq1}) and the pair $(u^{\star},\Phi^{\star})$ are called respectively an \textit{optimal solution} and \textit{optimal pair} of the optimal problem control (\ref{coeq1}) and (\ref{minizacost}).\\
\indent Let $\Phi^{\star}$ be  an optimal control,  the corresponding solution  $u_{\Phi^{\star}}$ of (\ref{coeq1}) will be denoted by $u^{\star}:=u_{\Phi^{\star}}$. Now we will consider the initial value problem involving a linear SPDE:
\begin{equation}\begin{array}{ll}\label{linequ}
dP(t)=&A(t)P(t)dt+(B_u(u^{\star}(t),\Phi^{\star})(P(t)))dt+\\
&+(B_{\Phi}(u^{\star}(t),\Phi^{\star})(\Phi(t)))dt+\Xi'(u^{\star}(t))(P(t))dW(t),\\
P(0)&=0.
\end{array}
\end{equation}
\indent Our first objective is to demonstrate the existence and uniqueness of the solution to (\ref{linequ}) (see Theorem \ref{TEUC}). We need to prove this fact because we have not found in the literature a result that can be applied to obtain existence and uniqueness of the solution of the equation (\ref{linequ}) whose coefficients satisfy the conditions {\bf A5} -{\bf A12}. To demonstrate that fact we use well known arguments and some ideas from \cite{HL}. With this result in hand we can demonstrate important estimates for that solution which will be used to obtain the maximum principle in Sect. 5.\\
\indent  Previous definitions are necessary, let  $\{u_{\Phi}(t)\}_ {t\in\mathbb{T}}$ be a $V$-valued stochastic process such that $\sup_{t\in\mathbb{T}}\|u_{\Phi}(t)\|^2<\infty$ and $\int_0^T\|u_{\Phi}(t)\|^2_Vdt<\infty$ for a.e.\\
\indent For each $M,$ a nonnegative integer, we define the following stopping times:
$$
\mathcal{T}_M^{u_{\Phi}}:=\left\{\begin{array}{lr}
T,\textmd{ if }\sup_{t\in\mathbb{T}}\|u_{\Phi}(t))\|^2<M&\\
\inf\left\{t\in\mathbb{T}:\sup_{t\in\mathbb{T}}\|u_{\Phi}(t)\|^2\geq M\right\},\textmd{otherwise}
\end{array}
\right.
$$
and
$$
\mathcal{S}_M^{u_{\Phi}}:=\left\{\begin{array}{lr}
T,\textmd{ if }\int_0^T\|u_{\Phi}(t))\|^2_Vdt<M&\\
\inf\left\{t\in\mathbb{T}:\int^t_0\|u_{\Phi}(s)\|^2_Vds\geq M\right\},\textmd{otherwise.}
\end{array}
\right.
$$
We define $\mathcal{T}_M:=\min\{\mathcal{T}_M^{u_{\Phi}},\mathcal{S}_M^{u_{\Phi}}\}$ and  $u^M(t):=u(t\wedge\mathcal{T}_M).$\\ 
\indent From the properties of the stopping time it holds that $\lim_{M\rightarrow\infty}\mathcal{T}_M=T$  (see the appendix of \cite{HL}).\\
\indent For  $M,$  integer nonnegative we consider the following initial value problem 
\begin{equation}\label{linstop}\begin{array}{rl}
d P^{M}(t)=&A(t)P^M(t)dt+B_u((u^\star)^M(t),\Phi^\star(t))(P^{M}(t))dt+\\
+&B_{\Phi}((u^{\star})^M(t),\Phi^\star(t))\Phi(t)dt+\Xi'((u^{\star})^M(t))(P^{M}(t))dW(t),\\
P^M(0)&=0.
\end{array}
\end{equation}
\indent In order to demonstrate the existence of the solution of (\ref{linequ}), first we will demonstrate the existence of the solution of (\ref{linstop}). With this end we will use the Faedo-Galerkin's method. Let $\{w_k\}_{k=1}^{\infty}$ be a complete orthonormal basis for $H$ and orthogonal in $V.$ For each $n\in \mathbb{Z}^+,$ we consider $H_n:=span\left\{w_1,w_2,\ldots,w_n\right\}$ equipped with the norm induced from $H.$ We define by $\Pi_n: H\rightarrow H_n$ the orthogonal projection such that, $\Pi_nh:=\sum_{i=1}^n(h,w_i)w_i$ for $h\in H,$ we can extend $\Pi$ to a projection operator $\Pi'_n:V'\rightarrow V_n'$ by $\Pi '_nv:=\sum_{i=1}^n\left\langle v,w_i\right\rangle w_i$ with $V_n=H_n=V_n'.$ Analogously if $\left\{v_i\right\}_{i=1}^{\infty}$ be the set of eigenfunctions of $R, $ $K_n:=span\left\{v_1,v_2,\dots,v_n\right\}$ we may to define the projection operator $P_n$ from $K$ into $K_n$ such that $P_nh:=\sum_{i=1}^n(h,v_i).$ We define the following truncations:
\begin{equation}\begin{array}{rl}
(u_{\Phi})_n:=\Pi_n u_{\Phi},&( B_u)_n(u,\Phi):=\Pi_nB_u(u,\Phi),\\
 (B_{\Phi})_n(u,\Phi):=&\Pi_n B_{\Phi}(u,\Phi),\, (\Xi')_n(u):=\Pi_n\Xi'(u),\\
 A_n(t)(u):=&\Pi'_nA(t)(u)\, W^n:=P_nW
\end{array}
\end{equation}
for $t\in\mathbb{T}$, $u\in V$.\\
\indent Analogously  we can assume that it is possible to define the truncation $\Phi_n$ for $\Phi\in\mathcal{O}.$\\
\indent Now, we will consider the following finite dimensional stochastic differential equation:
\begin{equation}\label{linaprostop}\begin{array}{rl}
dP^M_n(t)=&\left(A_n(t)P^M_n(t)+(B_{u})_n(((u^\star)^M)_n(t),\Phi^\star(t))(P^M_n(t))\right)dt+\\
&(B_{\Phi})_n(((u^\star)^M)_n(t),\Phi^\star(t))(\Phi_n(t))dt) +\\
&(\Xi')_n(((u^\star)^M)_n(t))(P^M_n(t))dW^n(t),
\end{array}
\end{equation}
a.s., $t\in\mathbb{T}$ with initial condition $P^M_n(0)=0.$
\begin{theorem}\label{TEUC} Suppose that $\rho_1(u)\leq\|u\|$ and conditions {\bf A1}-{\bf A12} are satisfied, then, there exists a unique 
solution $(Z(t))_{t\in\mathbb{T}}$ to the equation (\ref{linequ}) which is a $V-$valued,
$\mathcal{F}\times\mathcal{B}_{\mathbb{T}}-$measurable process and adapted to the filtration 
$(\mathcal{F}_t)_{t\in\mathbb{T}}.$ Furthermore, there exists a constant $c$ such that
\begin{equation}\label{estimatlinequ}\begin{array}{rl}
\mathbb{E}(\exp\left(-\int_0^T\rho_1(u^\star(t))dt\right)&\|P(T)\|^2)+\\
\mathbb{E}\int_0^T&(\exp\left(-\rho_1(u^\star(s))ds\right)\|P(t)\|^2_Vdt\leq\\
c\mathbb{E}(\int_0^T&\|u^\star(t)\|^2dt+\mathbb{E}\int_0^T\|\Xi'(u^\star(t)\|^2_2dt),\\
\mathbb{E}(\left(\exp-\int_0^T\rho_1(u^\star(t))dt\right)^2&\|P(T)\|^2)+\\
\mathbb{E}(\int_0^T&(\exp\left(-\rho_1(u^\star(s))ds\right)\|P(t)\|^2_Vdt)^2\leq\\
c(\mathbb{E}(\int_0^T&\|u^\star(t)\|^2dt)^2 +\mathbb{E}(\int_0^T\|\Xi'(u^\star(t)\|^2_2dt))^2
\end{array}
\end{equation}
\end{theorem}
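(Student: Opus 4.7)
My plan is to build the solution in three stages: solve the finite-dimensional Galerkin scheme (\ref{linaprostop}), pass to the limit in $n$ to obtain the stopped equation (\ref{linstop}), and then remove the cut-off by letting $M\to\infty$. The asserted weighted estimates will emerge naturally from the energy identity as the device that absorbs the $\rho_1$-dependent term produced by condition \textbf{A9}, so construction and estimation are carried out in parallel.

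For each fixed pair $(n,M)$, equation (\ref{linaprostop}) is a linear SDE in the finite-dimensional space $H_n$, driven by the finite-dimensional Wiener process $W^n$, with adapted coefficients that are bounded on $[0,\mathcal{T}_M]$ thanks to the stopping of $u^\star$ and to conditions \textbf{A7}, \textbf{A10}, \textbf{A11}, \textbf{A12}. Classical existence theory for SDEs with adapted Lipschitz coefficients then produces a unique continuous adapted solution $P_n^M$. The key energy estimate is obtained by applying It\^o's formula to the weighted norm $\exp\bigl(-\int_0^t g(\rho_1(u^\star(s)))\,ds\bigr)\|P_n^M(t)\|^2$ with a weight $g$ chosen so that its differential exactly cancels the bad $\rho_1(u^\star)^2\|P_n^M\|^2$ contribution coming from \textbf{A9}. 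The terms coming from $A$, $B_u$ and the diffusion $\Xi'$ are controlled jointly via coercivity \textbf{A3} and the monotonicity-type bound \textbf{A9}; the source term produced by $B_\Phi(u^{\star,M},\Phi^\star)(\Phi)$ is absorbed by \textbf{A11} and Young's inequality, producing the right-hand sides $\mathbb{E}\int_0^T\|u^\star\|^2\,dt$ and $\mathbb{E}\int_0^T\|\Xi'(u^\star)\|_2^2\,dt$. The quadratic version of the estimate follows by squaring and applying the Burkholder--Davis--Gundy inequality to the martingale part, uniformly in $n$.

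Passing $n\to\infty$ is then routine once the uniform bound in $L^2(\Omega\times\mathbb{T};V)\cap L^2(\Omega;C(\mathbb{T};H))$ is in hand: a subsequence of $\{P_n^M\}$ converges weakly in $L^2(\Omega\times\mathbb{T};V)$ and the linearity of (\ref{linstop}), with frozen coefficients along the stopped trajectory $u^{\star,M}$, lets one identify the weak limit $P^M$ as a solution of (\ref{linstop}). Condition \textbf{A8} guarantees the required stability of the linear operators $B_u((u^\star)^M,\cdot)$ and of $\Xi'((u^\star)^M)$ under the Galerkin truncations. Since $\mathcal{T}_M\uparrow T$ a.s., pathwise uniqueness of (\ref{linstop})---proved by the same weighted It\^o argument applied to the difference of two solutions, which forces it to vanish---gives consistency of the family $\{P^M\}_M$, so $P(t):=\lim_M P^M(t)$ is well defined and solves (\ref{linequ}). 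The estimates (\ref{estimatlinequ}) then follow by Fatou from their stopped counterparts.

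The main technical obstacle is that $u^\star$ is only square-integrable in $V$, so neither $B_u(u^\star,\Phi^\star)$ nor $\Xi'(u^\star)$ admits a uniform operator bound, and direct Gronwall estimates on $\mathbb{E}\|P\|^2$ are unavailable. This is precisely what the stopping-time localization and the exponential weight are designed to handle; verifying that the weighted energy inequality closes---i.e.\ that the bad $\rho_1(u^\star)^2\|P\|^2$ term is strictly absorbed by the derivative of the exponential weight, and that the stopping index $M$ may be sent to infinity without losing integrability on the right-hand side---is the delicate bookkeeping step, and it is what dictates the specific form of the weights appearing in (\ref{estimatlinequ}).
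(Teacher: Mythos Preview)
Your proposal follows essentially the same route as the paper: Galerkin approximation of the stopped linearized equation, a weighted It\^o energy identity with the exponential weight $e(t)=\exp\bigl(-\int_0^t(\rho_1((u^\star)^M_n(s)))^2ds\bigr)$ to absorb the $\rho_1$-term coming from \textbf{A9}, uniform-in-$n$ bounds yielding weak compactness in $L^2(\Omega\times\mathbb{T};V)$, identification of the limit via \textbf{A7}--\textbf{A8} and \textbf{A11}, and removal of the cut-off $M$ by the pathwise consistency argument of \cite{HL}. One small discrepancy: the source term $B_\Phi(u^{\star,M},\Phi^\star)(\Phi)$ is controlled through \textbf{A11} and produces $\mathbb{E}\int_0^T\|\Phi(s)\|_{\mathcal{O}}^2\,ds$ on the right-hand side of the intermediate estimate, not the $u^\star$ and $\Xi'(u^\star)$ quantities you name; the form stated in (\ref{estimatlinequ}) is reached only after invoking the a~priori bounds on $u^\star$ from Theorem~\ref{exsth}.
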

\begin{proof}We can apply the theory of finite dimensional It\^o equation (see \cite{KR1} Sect. 3.3 ) to the equation (\ref{linaprostop}) to demonstrate that there exists a unique solution $P^M_n,$ which has almost surely continuous trajectories in $H.$\\ \indent Now we need estimates to the equation (\ref{linaprostop}). Using the It\^o formula we get
$$\begin{array}{rl}
\|P^M_n(t)\|^2=&2\int_0^t\left\langle A_n(t)P^M_n(s), P^M_n(s)\right\rangle ds+\\
&2\int_0^t\left\langle (B_u)_n(((u^\star)^M)_n(s),\Phi^\star(s))(P^M_n(s)),P^M_n(s)\right\rangle ds+\\
&2\int_0^t\left\langle (B_{\Phi})_n(((u^\star)^M)_n(s),\Phi^\star(t))(\Phi_n(s),P^M_n(s)\right\rangle ds+\\
&\int_0^t\|(\Xi_u)_n(((u^\star)^M)_n(s))(P^M_n(s))\|^2_2 ds+\\
&2\int_0^t(P^M_n(s),(\Xi')_n(((u^\star)^M)_n(s))(P^M_n(t))dW^n(s)).
\end{array}
$$
Using the equation above, the It\^o formula with $e(t)=\exp(-\int_0^t(\rho_1(((u^{\star})^M))_n)^2(s)ds)$ and from our assumptions \textbf{A10} , \textbf{A11} and  \textbf{A12} we have
$$\begin{array}{rl}
\mathbb{E}(e(t)\|P^M_n(t)\|^2)&+\frac{\theta}{2}\mathbb{E}(\int_0^t e(s)\|P^M_n(t)\|^2_Vdt)\leq c [\mathbb{E}(\int_0^te'(s)\|P^M_n(s)\|^2ds)+\\
&\mathbb{E}\int_0^te(s)(\rho_1((u^\star)^M)_n(s))^2\|P^M_n(s)\|^2ds+\\
&\mathbb{E}\int_0^te(s)\|\Phi(s)\|^2_{\mathcal{O}}ds+K_6\mathbb{E}(\int_0^te(s)\|P^M_n(s)\|^2ds)]
\end{array}
$$  
Where $c$ is  a constant independent of $n$ and $M$. The last inequality and the Gronwall's inequality imply that 
\begin{equation}
\mathbb{E}(e(t)\|P^M_n(t)\|^2)+\frac{\theta}{2}\mathbb{E}(\int_0^T e(t)\|P^M_n(t)\|^2_Vdt)\leq c\mathbb{E}(\int_0^T\|\Phi(s)\|^2_{\mathcal{O}}ds),
\end{equation}
where $c$ is  a constant independent of $n.$\\
From the inequality above and using the assumption about $\rho_1$ we obtain
\begin{equation}\begin{array}{rl}\label{estimatlinear}
\mathbb{E}(\int_0^t\|P^M_n(t)\|^2_Vdt)&\leq\\
\mathbb{E}(&(e(T))^{-1}\int_0^te(s)\|P^M_n(s)\|^2_Vds)\leq\\
c\mathbb{E}(&\exp( \int_0^T\|u^\star(t\wedge\mathcal{T}_M)(t)\|^2)dt)\mathbb{E}(\int_0^T\|\Phi(s)\|^2_{\mathcal{O}}ds)\leq\\
c\mathbb{E}(&\int_0^T\|\Phi(s)\|^2_{\mathcal{O}}ds),
\end{array}
\end{equation}
where $c$ is a constant that depends on $M.$\\
Analogously we can prove that
\begin{equation}
\mathbb{E}(\|P^M_n(t)\|^2)^2\leq c
\end{equation}
and
\begin{equation}
\mathbb{E}(\|P^M_n(t)\|^4)+\mathbb{E(}\int_0^T \|P^M_n(t)\|^2_Vdt)^2\leq c
\end{equation}
where $c$ is a constant independent of $n$ but dependent of $M,$ $T$ and $\Phi$.\\
Thus, the inequality (\ref{estimatlinear}) implies that the sequence $(P_n^M)$ is bounded in the space $L^2(\Omega\times\mathbb{T};V).$ Then, there exists a subsequence of $(P_n^M)$ relabeled the same and $P^M \in L^2(\Omega\times\mathbb{T};V)$ such that letting $n\rightarrow\infty$ we have
\begin{equation}\label{weakconv2}
(P_n^M)\rightharpoonup P^M.
\end{equation} 
\indent Now, we will prove that
\begin{equation}\label{convT}
(B_u)_n (((u^\star)^M)_n,\Phi^\star)(P^M_n)\rightharpoonup B_u((u^\star)^M,\Phi^\star)(P^M)
\end{equation}
in $L^2(\Omega\times\mathbb{T};V').$ Let $v\in V$ and $v_n=\Pi_n v.$ To demonstrate (\ref{convT}), first we observe that 
\begin{equation}\label{convTI}\begin{array}{rl}
\langle(B_u)_n (((u^\star)^M)_n,\Phi^\star)(P^M_n),v\rangle&=\langle B_u(((u^\star)^M)_n,\Phi^\star)(P^M_n),v_n\rangle=\\
\langle(B_u(((u^\star)^M)_n,\Phi^\star)(P^M_n),v_n\rangle&-\langle(B_u(((u^\star)^M)_n,\Phi^\star)(P^M_n),v\rangle+\\
\langle B_u(((u^\star)^M)_n,\Phi^\star)(P^M_n),v\rangle&-\langle B_u((u^\star)^M,\Phi^\star)(P^M_n),v\rangle+\\
\langle B_u((u^\star)^M,\Phi^\star)(P^M_n),v\rangle&-\langle B_u((u^\star)^M,\Phi^\star)(P^M),v\rangle+\\
&\langle B_u((u^\star)^M,\Phi^\star)(P^M),v\rangle.
\end{array}
\end{equation}
 We need to estimate each term in the last inequality. From \textbf{A7} and the assumption about $\rho_1$, we have
 \begin{equation}\label{convT1}\begin{array}{rl}
 \mathbb{E}(\int_0^T|\langle(B_u(((u^\star)^M)_n(t)&,\Phi^\star(t))(P^M_n(t)),v_n\rangle-\\
 \langle B_u(((u^\star)^M)_n(t),&\Phi^\star(t))(P^M_n(t)),v\rangle|dt)\leq\\
  c\|v_n-v\|_V(\mathbb{E}&(\int_0^T\|(P^M)_n)\|^2dt)^{1/2}(\mathbb{E}(\int_0^T\|(u^\star)^M_n\|^2dt))^{1/2},
 \end{array}
 \end{equation}
where $c$ is a generic constant.\\
Using \textbf{A8}  and the assumption about $\rho_1$, we have
 \begin{equation}\label{convT2}\begin{array}{rl}
 \mathbb{E}(\int_0^T\langle B_u(((u^\star)^M)_n(t),&\Phi^\star(t))(P^M_n(t)),v\rangle-\\
 &\langle B_u((u^\star)^M(t),\Phi^\star(t))(P^M_n(t)),v\rangle dt)\leq\\
 &c\|v\|_V(\mathbb{E}(\int_0^T\|P^M)_n(t)dt\|^2)^{1/2}\times\\
 &(\mathbb{E}(\int_0^T\|((u^\star)^M)_n(t)-(u^\star)^M(t)\|^2dt))^{1/2},
\end{array} 
 \end{equation}
 where $c$ is a generic constant.\\
 And from (\ref{weakconv2}) we have
 \begin{equation}\label{convT3}
  B_u((u^\star)^M,\Phi^\star)(P^M_n)\rightharpoonup B_u((u^\star)^M,\Phi^\star)(P^M).
 \end{equation}
Using (\ref{convT1}), (\ref{convT2}), (\ref{convT3})  in (\ref{convTI}) we obtain
$$\begin{array}{rl}
\lim_{n\rightarrow\infty}\int_0^T&\langle(B_u)_n (((u^\star)^M)_n(t),\Phi^\star(t))(P^M_n)(t),\lambda(t)\rangle dt=\\
&\int_0^T\langle B_u ((u^\star)^M(t),\Phi^\star(t))P^M(t),\lambda(t)\rangle dt
\end{array}
$$
for all $\lambda=v\phi,$ with $v\in V,$ $\phi$ a $V-$valued process, $\mathcal{F}\times\mathcal{B}(\mathbb{T})-$measurable, $\mathcal{F}_t-$adapted and for a.e. $(\omega,t)$ bounded. As the maps $\lambda$ with these properties are dense in $L^2(\Omega\times\mathbb{T};V)$ we get (\ref{convT}).\\
Using \textbf{A11} it is possible to demonstrate that
\begin{equation}\label{weakconv3}
B_\Phi((u^\star)^M_n,\Phi^\star)(\Phi_n)\rightharpoonup B_\Phi((u^\star)^M,\Phi^\star)(\Phi)
\end{equation} 
\indent Using (\ref{convT}), (\ref{weakconv3}) and taking the limit $n\rightarrow\infty$ in  (\ref{linaprostop}) we obtain
 \begin {equation}\begin{array}{rl}
 (P^M(t),v)=&\dint_0^t(A(s),P^M(s))ds+\int_0^t(B_u((u^{\star})^M(s),\Phi^{\star}(s))(P^M(s)),v)ds+\\
 &\dint^t_0(B_{\Phi}((u^{\star})^M(s),\Phi^{\star}(s))(\Phi(s)),v)ds+\\
 &\dint_0^t(v,\Xi'(u^{\star}(s)(P^M(s)))dW(s)).
 \end{array}
 \end{equation}
 From this and arguing as \cite{HL} it is possible to conclude that there is a process $\{P^M_{\Phi}(t)\}_{t\in\mathbb{T}}$ in $L^2(\Omega\times\mathbb{T};V)$ which has a.s. continuous trajectories in $H$ and satisfies  (\ref{linstop}) a.s for all $v\in V,$ $t\in\mathbb{T}.$ Using well known methods we can demonstrate that that process is the unique solution.\\ 
 \indent We proceed to demonstrate the existence of the solution of  (\ref{linequ}), we will use the argument of \cite{HL}. Let $\Omega_M=\{\omega\in\Omega: P^M_{\Phi}(\omega,\cdot)\textmd{ satisfies }(\ref{linstop})\textmd{ for all }t\in\mathbb{T},\, v\in V\textmd{ and has continuous trajectories in }H\},$  $\Omega'=\bigcap_{M=1}^{\infty}\Omega_M,$  and $S=\bigcup_{K=1}^{\infty}\bigcup_{1\leq M\leq K}\{\omega\in\Omega':\mathcal{T}_M=T\textmd{ and there is }t\in \mathbb{T}, P^M_{\Phi}(\omega,t)\neq P^K_{\Phi}(\omega,t)\},$ it is possible to demonstrate that $\mathbb{P}(S)=0.$ \\
 \indent We define $\Omega''=\bigcup_{M=1}^{\infty}\{\mathcal{T}_M=T\},$ then $\mathbb{P}(\Omega'\bigcap\Omega'')\setminus S)=1.$ For $\omega\in\Omega'\bigcap\Omega'')\setminus S$ there is a natural number $M_0$ such that $\mathcal{T}_M=T$ for all $M\geq M_0.$ Thus, $(u^{\star})^M(t)= (u^{\star})^{M_0}(t)$ for all $t\in\mathbb{T}$ and for all $M\geq M_0.$ From (\ref{linstop}) we have
  \begin {equation}\label{lineqend}\begin{array}{rl}
 (P^M(t),v)=&\dint_0^t(A(s),P^M(s))ds+\int_0^t(B_u(u^{\star}(s),\Phi^{\star}(s))(P^M(s)),v)ds+\\
 &\dint^t_0(B_{\Phi}(u^{\star}(s),\Phi^{\star}(s))(\Phi(s)),v)ds+\\
 &\dint_0^t(v,\Xi'(u^{\star}(s)(P^M(s)))dW(s)),
 \end{array}
 \end{equation}
 for all $M\geq M_0,$ and all $t\in\mathbb{T},$ $v\in V.$ Then,
 $$
 \lim_{M\rightarrow\infty}\int_0^T\|P^M(t)-P^M_0(t)\|_V^2dt=0
 $$
 and
 $$
 \lim_{M\rightarrow\infty}\|P^M(t)-P^M_0(t)\|^2dt=0
 $$
 for all  $t\in\mathbb{T}.$\\
 \indent For each $t\in\mathbb{T}$ we define 
 $$
 P(\omega,t):=P^M_0(\omega,t)\lim_{M\rightarrow\infty}P^M(\omega,t).
 $$
 Thanks to (\ref{lineqend}) we obtain
  \begin {equation}\label{lineqend1}\begin{array}{rl}
 (P(t),v)=&\dint_0^t(A(s),P(s))ds+\int_0^t(B_u(u^{\star}(s),\Phi^{\star}(s))(P(s)),v)ds+\\
 &\dint^t_0(B_{\Phi}(u^{\star}(s),\Phi^{\star}(s))(\Phi(s)),v)ds+\\
 &\dint_0^t(v,\Xi'(u^{\star}(s)(P(s)))dW(s)),
 \end{array}
 \end{equation}
 for all $\omega\in\Omega'\bigcap\Omega'')\setminus S),$ $t\in\mathbb{T},$ $v\in V.$ This demonstrates the existence of the solution of (\ref{linequ}). The inequalities in (\ref{estimatlinequ}) and unicity are demonstrated as  in \cite{HL}.
\end{proof}\cqd
\section{The Adjoint Equation}
It is known that there is a connection between BSDEs and the maximum principle for stochastic differential equations. Thus, it seems natural to consider that there is the same connection with the equation (\ref{coeq1}). With this objective, let us first consider the \textit{Hamiltonian}:
\begin{equation}\label{hamilt}
\begin{array}{rl}
\mathcal{H}:&V\times\mathcal{O}\times V\times L_2(U;H)\rightarrow\mathbb{R},\\
\mathcal{H}(u,\Phi,v,Z)&:=\mathcal{L}(u,\Phi)+\langle B(u,\Phi),v\rangle+\langle\Xi(u),Z\rangle_2
\end{array}
\end{equation}
Now, we consider the adjoint equation associated with (\ref{coeq1}) which is given by the following terminal value problem involving a BSPDE:
\begin{equation}\label{BSE}\begin{array}{rl}
-dv^{\Phi}(t)=&(A^{\star}v^{\Phi}(t)+\nabla_u\mathcal{H}(u^{\Phi}(t),\Phi(t),v^{\Phi}(t),Z^{\Phi}(t)))dt-Z^{\Phi}(t)dW(t),\\
v^{\Phi}(T)=&\nabla\mathcal{K}(u^{\Phi}(T)).
\end{array}
\end{equation} 
Where $\nabla$ denotes the gradient.\\
According to (\ref{hamilt}) we can rewrite (\ref{BSE}) in the following way:
\begin{equation}\label{BSER}
\begin{array}{rl}
dv^{\Phi}(t)=&(-A^{\star}v^{\Phi}(t)-\nabla_u\mathcal{L}(u^{\Phi}(t),\Phi(t))-\nabla_u\langle B(u^{\Phi}(t),\Phi(t),v^{\Phi}(t))\rangle+\\
-&\nabla_u\langle\Xi(u^{\Phi}(t)),Z^{\Phi}(t)\rangle_2)dt+Z^{\Phi}(t)dW(t),\ \ 0\leq t\leq T,\\
v^{\Phi}(T)=&\nabla\mathcal{K}(u^{\Phi}(T)).
\end{array}
\end{equation}
\begin{definition}In this work we understand that the $\mathcal{F}_t-$adapted pair $(v,Z)$ in $L^2(\Omega\times\mathbb{T};H)\times L^2(\Omega\times\mathbb{T};L_2(U;H))$ is a solution of (\ref{BSER}) if the following is satisfied
\begin{equation}\label{BSEI}\begin{array}{rl}
(v^{\Phi}(t),y)=&(v^{\Phi}(T),y)+\int^T_t(\langle A^{\star}v^{\Phi}(s),y\rangle+(\nabla_u\mathcal{L}(u^{\Phi}(s),\Phi(s)),y)_Vds+\\
&\int^T_t\langle B_u(u^{\Phi}(s),\Phi(t))(y),v^{\Phi}(s)\rangle+(\Xi_u(u^{\Phi}(s))(y),Z^{\Phi}(s))_2)ds-\\
&\int^T_t(y,Z^{\Phi}(s)dW(s))
\end{array}
\end{equation}
a.s. for all $y\in V$ and $t\in\mathbb{T}.$
\end{definition}
Let $M>0$ be a  natural number, given $\Phi\in\mathcal{U},$ let $u^{\Phi}$ be the corresponding solution to SPDE in (\ref{coeq1}) and $u^{\Phi,M}:=(u^{\Phi})^M.$\\
\indent In order to prove that there is a solution (\ref{BSE}) we will consider the following terminal value problem:
\begin{equation}\label{BSERstp}\begin{array}{rl}
dv^{\Phi, M}(t)&=(-A^{\star}v^{\Phi,M}(t)-\nabla_u\mathcal{L}({u^{\Phi,M}(t)},\Phi(t))-\\
&\nabla_u\langle B(u^{\Phi,M}(t),\Phi(t)),v^{\Phi,M}(t)\rangle-\nabla_u\langle\Xi(u^{\Phi,M}(t)),Z^{\Phi,M}(t)\rangle_2)dt+\\
&Z^{\Phi,M}(t)dW(t),\\
v^{\Phi,M}(T)=&\nabla\mathcal{K}(u^{\Phi}(T)).
\end{array}
\end{equation}
\indent To demonstrate that there is a solution to the equation (\ref{BSERstp}) we will use the Faedo-Galerkin's method.  Thus, for each $n\in\mathbb{Z}^+,$ we will consider the projected equation corresponding with (\ref{BSERstp}):
\begin{equation}\label{BSERn}\begin{array}{rl}
d(v^{\Phi,M})_n(t)=&(-A^{\star}_n(v^{\Phi,M})_n(t)-(\nabla_u\mathcal{L})_n(((u^{\Phi,M}))_n(t),\Phi_n(t))-\\
&(\nabla_u)_n\langle B_n(((u^{\Phi,M}))_n(t),\Phi_n(t),(v^{\Phi})_n(t))\rangle+\\
&(\nabla_u)_n\langle\Xi_n(((u^{\Phi,M}))_n(t)),(Z^{\Phi})_n(t)\rangle_2)dt+\\
&(Z^{\Phi})_n(t)dW^n(t),\\
(v^{\Phi})_n(T)=&(\nabla\mathcal{K})_n((u^{\Phi})(T)).
\end{array}
\end{equation}
\begin{proposition}  
Suppose that {\bf H1} - {\bf H2}  and {\bf A8} - {\bf A12} is satisfied. There is a solution $(v^{\Phi,M},Z^{\Phi,M})$ to (\ref{BSERstp}) for each $M=1,2,\ldots$ and
\begin{equation}\label{estimequM}
\mathbb{E}(\|v^{\Phi,M)}\|^2(t)+\mathbb{E}(\int^T_t\|v^{\Phi,M}\|_V^2ds)+\mathbb{E}(\int^T_t\|Z^{\Phi,M}\|^2_2ds)\leq c\ \ 
\end{equation}
with $t\in\mathbb{T}$, to a constant $c$ dependent of $M.$ 
\end{proposition}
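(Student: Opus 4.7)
The plan is to follow a Faedo--Galerkin scheme applied backward in time, parallel to the one used in Theorem~\ref{TEUC}. First, I would establish existence for the projected equation (\ref{BSERn}). Since the stopped process $(u^{\Phi,M})_n$ is bounded in $H$ by $M^{1/2}$ and each finite-dimensional coefficient $(B_u)_n, (\Xi_u)_n, (\nabla_u\mathcal{L})_n$ is Lipschitz in its dependence on $(v_n, Z_n)$ with an $M$-dependent constant (thanks to \textbf{A8}, \textbf{A12}, and \textbf{H2}), while $(\nabla\mathcal{K})_n(u^{\Phi}(T))$ lies in $L^2(\Omega; H_n)$ by \textbf{H2}, the standard Pardoux--Peng theory for finite-dimensional BSDEs yields a unique adapted pair $((v^{\Phi,M})_n, (Z^{\Phi,M})_n)$ with continuous $H_n$-valued trajectories.

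Next, I would derive uniform-in-$n$ a priori bounds. Applying It\^o's formula to $\|(v^{\Phi,M})_n(t)\|^2$ on $[t,T]$ and taking expectations produces
$$\mathbb{E}\|(v^{\Phi,M})_n(t)\|^2 + \mathbb{E}\int_t^T\|(Z^{\Phi,M})_n(s)\|_2^2\,ds = \mathbb{E}\|(v^{\Phi,M})_n(T)\|^2 + 2\mathbb{E}\int_t^T\mathcal{I}_n(s)\,ds,$$
where $\mathcal{I}_n$ collects the four driver terms paired against $(v^{\Phi,M})_n$. The coercivity \textbf{A3} on $A^{\star}$ (inherited from $A$) contributes a term $-\theta\|(v^{\Phi,M})_n\|_V^2$; condition \textbf{A9} controls $\langle B_u((u^{\Phi,M})_n,\Phi^{\star})(v_n),v_n\rangle + \|\Xi_u((u^{\Phi,M})_n)(v_n)\|_2^2$ by $\tfrac{\theta_1}{2}\rho_1^2\|v_n\|^2$; \textbf{H2} bounds $\|\nabla_u\mathcal{L}\| \leq k_{\mathcal{L}}\|(u^{\Phi,M})_n\|$; and the cross term $2((Z^{\Phi,M})_n,\Xi_u((u^{\Phi,M})_n)(v_n))_2$ is split via Young's inequality into a half of $\|(Z^{\Phi,M})_n\|_2^2$ absorbed on the LHS and a remainder proportional to $\|\Xi_u v_n\|_2^2$, which is exactly what \textbf{A9} controls. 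Because $\rho_1(u)\leq\|u\|$ and $\|(u^{\Phi,M})_n\|^2\leq M$, the coefficients become $M$-dependent constants uniform in $n$, and a backward Gronwall argument (together with the terminal bound $\|\nabla\mathcal{K}(u^{\Phi}(T))\|\leq k_{\mathcal{K}}\|u^{\Phi}(T)\|$) yields (\ref{estimequM}) at the approximate level.

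Finally, I would pass to the limit $n\to\infty$ by weak compactness: a subsequence satisfies $(v^{\Phi,M})_n \rightharpoonup v^{\Phi,M}$ in $L^2(\Omega\times\mathbb{T};V)$ and $(Z^{\Phi,M})_n \rightharpoonup Z^{\Phi,M}$ in $L^2(\Omega\times\mathbb{T};L_2(U;H))$. To pass the weak limit through the nonlinear terms I would reuse the decomposition leading to (\ref{convT}): \textbf{A7}--\textbf{A8} combined with strong convergence of $((u^{\Phi,M})_n)$ to $u^{\Phi,M}$ in $L^2(\Omega\times\mathbb{T};H)$ forces the adjoint action of $B_u$ paired with test functions to converge, and \textbf{A12} together with \textbf{H2} handles the $\Xi_u$ and $\nabla_u\mathcal{L}$ pairings. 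Identifying the integral form of (\ref{BSERstp}) in the limit, and using weak lower semicontinuity of the norms, gives both the existence of $(v^{\Phi,M}, Z^{\Phi,M})$ and the inequality (\ref{estimequM}).

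The hard part is the $(Z^{\Phi,M}, \Xi_u(u^{\Phi,M})(v^{\Phi,M}))_2$ cross term in the a priori bound: a naive Young split would destroy the absorption of $\|Z^{\Phi,M}\|_2^2$ on the left-hand side. The correct balance relies on the specific form of \textbf{A9}, which bounds the \emph{sum} $2\langle B_u(v_1,\Phi)(v),v\rangle + 2\|\Xi_u(v_1)(v)\|_2^2$; the factor of $2$ on $\|\Xi_u v\|_2^2$ there is precisely what is needed to dominate the remainder left over after the Young decomposition, so that the backward Gronwall argument closes.
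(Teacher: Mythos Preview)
Your proposal is correct and follows essentially the same Faedo--Galerkin route as the paper: existence of the projected pair $((v^{\Phi,M})_n,(Z^{\Phi,M})_n)$ from standard finite-dimensional BSDE theory, uniform-in-$n$ a priori bounds via It\^o's formula on $[t,T]$, and passage to the limit by weak compactness exactly as in Theorem~\ref{TEUC}. The only difference is bookkeeping: the paper controls the $B_u$ and $\Xi_u$ contributions through \textbf{A7} and \textbf{A12} separately and multiplies by an exponential weight $e^{r(t)}$ with $r'(t)=1+2K_6+\tfrac{K_2^2}{\theta}\rho_1^2((u^{\Phi,M})_n(t))$ chosen so that the $-r'(s)e^{r(s)}\|v\|^2$ term cancels the bad $\rho_1^2\|v\|^2$ piece exactly, then strips the weight using the $M$-bound, whereas you bundle the two contributions via \textbf{A9}, use the $M$-bound on $\rho_1^2$ up front, and close with backward Gronwall.
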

\proof It is possible to demonstrate that the coefficients of the equation (\ref{BSERn}) satisfy the conditions of the Theorem 3.4 of   \cite{BD}. Thus, let $((v^{\Phi,M})_n,(Z^{\Phi,M})_n)$ be a solution  to the equation (\ref{BSERn}).\\
\indent Now we need to obtain estimates to the solution $((v^{\Phi,M})_n,(Z^{\Phi,M})_n).$ To do this we employ the It\^o formula to that solution and to the function $e^{r(t)}$, with $r(t)=(\int_0^t1+2K_6+\frac{K_2^2}{\theta}((\rho_1((u^{\Phi,M})^n(s))^2))ds$ and taking expectation we get
$$\begin{array}{rl}
\mathbb{E}(e^{r(t)}&\|(v^{\Phi,M})_n(t)\|^2)+\mathbb{E}(\int_t^Te^{r(s)}\|(Z^{\Phi,M})_n(s)\|^2_2ds)=\mathbb{E}(e^{r(T)}\|(v^{\Phi,M})_n(T)\|)+\\
&2\mathbb{E}(\int_t^Te^{r(s)}\langle A^{\star}  (v^{\Phi,M})_n(s), (v^{\Phi,M})_n(s)\rangle ds)+\\
&2\mathbb{E}(\int_t^Te^{r(s)}((\nabla_u)_n\mathcal{L}((u^{\Phi})_n(s),\Phi_n(s)),(v^{\Phi,M})_n(s))ds)+\\
&2\mathbb{E}(\int_t^Te^{r(s)}((\nabla_u)_n\langle B((u^{\Phi,M})_n(s),\Phi_n(s),(v^{\Phi,M})_n(s))\rangle,(v^{\Phi,M})_n(s))ds)+\\
&2\mathbb{E}(\int_t^Te^{r(s)}((\nabla_u)_n(\Xi((u^{\Phi})_n(s)),(Z^{\Phi,M})_n(s))_2 ,(v^{\Phi,M})_n(s))ds)-\\
&\mathbb{E}(\int_t^Tr'(s)e^{r(s)}\|(v^{\Phi,M})_n(s)\|^2ds).
\end{array}
$$
From this and using  {\bf A8}-{\bf A12} we have
$$
\begin{array}{rl}
\mathbb{E}(e^{r(t)}&\|(v^{\Phi,M})_n(t)\|^2)+\frac{1}{2}\mathbb{E}(\int_t^Te^{r(s)}\|(Z^{\Phi,M})_n(s)\|^2_2ds)+\\
&\frac{\theta}{2}\mathbb{E}(\int_t^Te^{r(s)}\|(v^{\Phi,M})_n(s)\|_V^2ds)\leq \mathbb{E}(e^{r(T)}\|v^{\Phi,M}(T)\|)+\\
&\mathbb{E}(\int_t^Te^{r(s)}K_{\mathcal{L}}\|(u^{\Phi,M})_n(s)\|^2ds)+\mathbb{E}(\int_t^Te^{r(s)}\|(v^{\Phi,M})_n(s)\|^2ds)+\\
&\frac{K_2^2}{\theta}\mathbb{E}(\int_t^Te^{r(s)}(\rho_1((u^{\Phi})_n(s)))^2\|(v^{\Phi,M})_n(s)\|^2ds)+\\
&2\mathbb{E}(\int_t^Te^{r(s)}K_6\|(v^{\Phi,M})_n(s)\|^2ds)-\\
&\mathbb{E}(\int_t^Tr'(s)e^{r(s)}\|(v^{\Phi,M})_n(s)\|^2ds).
\end{array}
$$
Since that $r'(t)=1+2K_6+\frac{K^2_2}{\theta}(\rho_1((u^{\Phi,M})^n(t)))^2$ from the last inequality we have
$$
\begin{array}{rl}
\mathbb{E}(e^{r(t)}&\|v^{\Phi,M}(t)\|^2)+\frac{1}{2}\mathbb{E}(\int_t^Te^{r(s)}\|Z^{\Phi,M}(s)\|^2_2ds)+\\
&\frac{\theta}{2}\mathbb{E}(\int_t^Te^{r(s)}\|v^{\Phi,M}(s)\|_V^2ds\leq\mathbb{E}e^{r(T)}\|v^{\Phi,M}(T)\|.\\
\end{array}
$$
Then,
 $$
 \mathbb{E}(e^{r(t)}\|(v^{\Phi,M})_n(t)\|^2)+\mathbb{E}(\int_t^Te^{r(s)}\|(v^{\Phi,M})_n(s)\|_V^2ds+\mathbb{E}(\int_t^Te^{r(s)}\|(Z^{\Phi,M})_n(s)\|^2_2ds) \leq c
 $$
 where $c$ is a constant independent of $n$.\\
 Using the same argument as the demonstration of Theorem \ref{TEUC}, we can demonstrate that
 $$
 \mathbb{E}\|(v^{\Phi,M})_n(t)\|^2+\mathbb{E}(\int_t^T\|(v^{\Phi,M})_n(s)\|_V^2ds+\mathbb{E}(\int_t^T\|(Z^{\Phi,M})_n(s)\|^2_2ds) \leq c
 $$
 for a constant $c$ which depends on $M$ but independent of $n.$\\
\indent Since the sequences $((v^{\Phi,M})_n)$ and  $((Z^{\Phi,M})_n)$ are bounded in the space $L^2(\Omega\times\mathbb{T};V)$ and $L^2(\Omega\times\mathbb{T}:L_2(U;H))$ respectively, there exists subsequences of $((v^{\Phi,M})_n)$ and $((Z^{\Phi,M})_n)$ relabeled the same and $v^M \in L^2(\Omega\times\mathbb{T};V)$ and $Z^M\in L^2(\Omega\times\mathbb{T}:L_2(U;H))$ such that letting $n\rightarrow\infty$ we have
\begin{equation}\label{weakconv4}
\begin{array}{rl}
((v^{\Phi,M})_n)\rightharpoonup v^M,\\
((Z^{\Phi,M})_n)\rightharpoonup Z^M.
\end{array}
\end{equation}
Following the same argument as the proof of the Theorem \ref{TEUC}, we can demonstrate the following weak convergence $(\nabla_u\mathcal{L})_n(((u^{\Phi,M}))_n,\Phi_n)\rightharpoonup (\nabla_u\mathcal{L})((u^{\Phi,M}),\Phi),$ $(\nabla_u)_n\langle B(((u^{\Phi,M}))_n,\Phi_n),(v^{\Phi,M})_n\rangle \rightharpoonup (\nabla_u)\langle B(u^{\Phi,M},\Phi),(v^{\Phi,M})\rangle,$ $(\nabla_u)_n\langle\Xi(((u^{\Phi,M}))_n),(Z^{\Phi,M})_n\rangle_2\rightharpoonup(\nabla_u)\langle\Xi((u^{\Phi,M}),(Z^{\Phi,M})\rangle_2$ as $n\rightarrow\infty.$\\
Letting $n\rightarrow\infty$ in (\ref{BSERn}) we obtain a solution $(v^{\Phi,M},Z^{\Phi,M})$ to equation (\ref{BSERstp}).\cqd
\begin{theorem}\label{BSExUn} Given $u\in V,$ and $\Phi\in\mathcal{O}.$ Under the assumptions of the last proposition, there is a unique solution $(v^{\Phi},Z^{\Phi})$ to (\ref{BSE}). 
\end{theorem}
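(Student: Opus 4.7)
The plan is to obtain $(v^{\Phi},Z^{\Phi})$ as a limit of the stopped solutions $(v^{\Phi,M},Z^{\Phi,M})$ produced by the previous proposition, using a localization/pasting argument that mirrors the one used in the proof of Theorem \ref{TEUC}. The crucial observation is that the terminal condition $\nabla\mathcal{K}(u^{\Phi}(T))$ in (\ref{BSERstp}) does \emph{not} depend on $M$, and that on the event $\{\mathcal{T}_M = T\}$ we have $u^{\Phi,M}(t)=u^{\Phi}(t)$ for all $t\in\mathbb{T}$, so the coefficients of (\ref{BSERstp}) for different $M$ coincide along the trajectory. Since $\mathcal{T}_M\to T$ a.s.\ (as recalled earlier from the appendix of \cite{HL}), any fixed $\omega$ eventually sits in $\{\mathcal{T}_M=T\}$ for all sufficiently large $M$, and this is what will allow the passage to the limit.

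First I would establish uniqueness at the truncated level: given two solutions of (\ref{BSERstp}) for the same $M$, apply It\^o's formula to $e^{r(t)}\|v^{\Phi,M}_1(t)-v^{\Phi,M}_2(t)\|^2$ with the same exponential weight $r(t)$ used in the proof of the previous proposition (it absorbs the $\rho_1$-terms coming from \textbf{A8}), then use \textbf{A9}, \textbf{A12} and \textbf{H2} to bound the drift terms, and conclude by Gronwall that $v^{\Phi,M}_1=v^{\Phi,M}_2$ and $Z^{\Phi,M}_1=Z^{\Phi,M}_2$. Next, for $M\le K$ I would compare $(v^{\Phi,M},Z^{\Phi,M})$ with $(v^{\Phi,K},Z^{\Phi,K})$ on the event $\{\mathcal{T}_M=T\}$: both pairs satisfy the same backward equation there, and the truncated uniqueness statement forces them to agree. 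Following the template of the proof of Theorem \ref{TEUC}, I would introduce the sets
\[
\Omega_M=\{\omega:(v^{\Phi,M},Z^{\Phi,M})(\omega,\cdot)\text{ satisfies (\ref{BSERstp})}\},\qquad \Omega'=\bigcap_M\Omega_M,\qquad \Omega''=\bigcup_M\{\mathcal{T}_M=T\},
\]
and the exceptional set $S$ of pastings that fail, showing $\mathbb{P}(S)=0$ and $\mathbb{P}(\Omega'\cap\Omega''\setminus S)=1$. For $\omega$ in this full-measure set, choose $M_0(\omega)$ with $\mathcal{T}_M(\omega)=T$ for $M\ge M_0$, and define $v^{\Phi}(\omega,t):=v^{\Phi,M_0}(\omega,t)$, $Z^{\Phi}(\omega,t):=Z^{\Phi,M_0}(\omega,t)$.

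Then I would verify that $(v^{\Phi},Z^{\Phi})$ satisfies (\ref{BSEI}): each truncated equation reduces to (\ref{BSERstp}) with $u^{\Phi,M}$ replaced by $u^{\Phi}$ on $\{\mathcal{T}_M=T\}$, and the integrands converge pathwise; the stochastic integral of $Z^{\Phi,M}$ against $W$ is well-defined via localization since the estimate (\ref{estimequM}) on each compact set $\{\mathcal{T}_M=T\}$ transfers to a local estimate for $Z^{\Phi}$. For global uniqueness, suppose $(v_1,Z_1)$ and $(v_2,Z_2)$ are two solutions; I would apply It\^o's formula to $e^{r(t)}\|v_1(t)-v_2(t)\|^2$ on $[t,T\wedge\mathcal{T}_M]$ with the same exponential weight (now driven by $\rho_1(u^{\Phi})$), absorb the quadratic variation using \textbf{A9} and \textbf{A12}, take expectations (killing the martingale piece by the localization), and conclude by Gronwall and letting $M\to\infty$.

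The main obstacle I anticipate is the passage to the limit for the stochastic integral term. Unlike the forward case in Theorem \ref{TEUC}, where weak convergence in $L^2(\Omega\times\mathbb{T};V)$ was sufficient, here the process $Z^{\Phi,M}$ is only controlled in $L^2(\Omega\times\mathbb{T};L_2(U;H))$ with a constant depending on $M$, so joint convergence of $(v^{\Phi,M},Z^{\Phi,M})$ and identification of the limiting stochastic integral has to be done locally on $\{\mathcal{T}_M=T\}$ and then glued; controlling the exceptional set $S$ (the nonuniqueness of pastings) requires the truncated uniqueness established in the first step, so that step must be genuinely quantitative rather than just an abstract compactness argument.
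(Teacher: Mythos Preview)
Your proposal is correct and follows essentially the same localization/pasting template as the paper's proof: define $\Omega_M$, $\Omega'$, $\Omega''$, $S$, show $\mathbb{P}(S)=0$ via truncated uniqueness, pick $M_0(\omega)$ on the full-measure set, and set $(v^{\Phi},Z^{\Phi}):=(v^{\Phi,M_0},Z^{\Phi,M_0})$. The only remark is that the obstacle you flag at the end is not really present: because the construction is purely pathwise (for fixed $\omega$ the pair $(v^{\Phi,M},Z^{\Phi,M})$ is eventually constant in $M$ and already satisfies the untruncated equation), no limiting procedure for the stochastic integral is needed, and the $M$-dependent constants in (\ref{estimequM}) are harmless.
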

\proof
With the goal to demonstrate that there is a solution to equation (\ref{BSE},) we will use the same argument as in the proof of Theorem \ref{TEUC}, let $\Omega_M=\{\omega\in\Omega:(v^{\Phi,M}(\omega,\cdot),Z^{\Phi,M}(\omega,\cdot))\textmd{ for all }t\in\mathbb{T},v\in V \}.$ Set $\Omega'=\bigcap_{M=1}^{\infty}\Omega_M$ and $S=\bigcup_{K=1}^{\infty}\bigcup_{1\leq M\leq K}\{\omega\in\Omega':\mathcal{T}_M=T\textmd{ and there is }t\in\mathbb{T},\  (v^{\Phi,M}(\omega,t),Z^{\Phi,M}(\omega,t)) \neq(v^{\Phi,K}(\omega,t),Z^{\Phi,K}(\omega,t))\},$ it is possible to demonstrate that $\mathbb{P}(S)=0.$\\
\indent Let $\Omega"=\bigcup_{M}\{\mathcal{T}_M=T\},$ it is possible to demonstrate that $\mathbb{P}(((\Omega'\cap\Omega"))\setminus S)=1.$ Let  $\omega\in(\Omega'\cap\Omega"))\setminus S,$ for this $\omega$ there is a natural number $M_0$ such that 
$\mathcal{T}_M=T$ for all $M\geq M_0$ implying that $u^{\Phi,M}(s)=u^{\Phi}(s)$ for all $s\in\mathbb{T}$ and for all $M\geq M_0.$ The equation (\ref{BSERstp}) implies
\begin{equation}\label{BSEIntSt}\begin{array}{rl}
(v^{\Phi,M}(t),y)\!\!&=(v^{\Phi,M}(T),y)+\int^T_t(\langle A^{\star}v^{\Phi,M}(s),y\rangle+(\nabla_u\mathcal{L}(u^{\Phi}(s),\Phi(s)),y)_Vds\\
\!\!\!\!&+\int^T_t\langle B_x(u^{\Phi}(s),\Phi(t))(y),v^{\Phi,M}(s)\rangle+(\Xi_x(u^{\Phi}(s))(y),Z^{\Phi,M}(s))_2)ds\\
&-\int^T_t(y,Z^{\Phi}(s)dW(s))
\end{array}
\end{equation}
a.s. for all $y\in V$ and $t\in\mathbb{T}.$
Hence
$$\begin{array}{rl}
\|(v^{\Phi,M}-v^{\Phi,M_0})(t)\|^2&=\int_0^T\|(v^{\Phi,M}-v^{\Phi,M_0})(s)\|^2_Vds=\\
&\int_0^T\|(Z^{\Phi,M}-Z^{\Phi,M_0})(s)\|^2_2ds=0
\end{array}
$$
for all $t\in\mathbb{T}.$ Thus, taking $v^{\Phi}(\omega,t)=v^{\Phi,M_0}(\omega,t)$ and $Z^{\Phi}(\omega,t)=Z^{\Phi,M_0}(\omega,t)$ for $t\in\mathbb{T}$ and $\omega\notin S,$ we obtain the solution $(v^{\Phi},Z^{\Phi})$ to equation  (\ref{BSE}).
\cqd
\section{A stochastic maximum principle}
Let $\Phi^{\star}$ be an optimal control and let $u^{\Phi^{\star}}$ be the corresponding solution of (\ref{coeq1}). This solution will be denoted briefly by $u^{\star}$. Let $\Phi$ be such that $\Phi^{\star}+\Phi\in\mathcal{U}.$ For a given $0\leq\epsilon\leq1$ consider the control:\\
$$
\Phi_{\epsilon}(t)=\Phi^{\star}(t)+\epsilon\Phi(t),\ \ \ t\in\mathbb{T},
$$
to which will be associated the  solution of (\ref{coeq1}), $u^{\Phi_{\epsilon}},$ which will be denoted by $u_{\epsilon}$ for short.\\
Now, we will obtain some important estimates that we will use to obtain the maximum principle.
\begin{lemma}\label{estpm} With the notation above we have
\begin{equation}\label{estimat3}
\sup_{t\in\mathbb{T}}\mathbb{E}(e(t)(\|u_{\epsilon}-u^{\star})(t)\|^2)=O(\epsilon^2),
\end{equation}
where $e(t)=e^{-\int_0^tK+\rho(u^{\star}(s))ds}$ and $K$ is the constant from {\bf A2}.
\end{lemma}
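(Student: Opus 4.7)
The plan is to analyze the difference process $\delta u(t) := u_\epsilon(t) - u^\star(t)$, which has $\delta u(0)=0$ and satisfies
\begin{equation*}
d(\delta u)(t) = \bigl[A(t)\delta u(t) + B(u_\epsilon(t),\Phi_\epsilon(t)) - B(u^\star(t),\Phi^\star(t))\bigr]dt + \bigl[\Xi(u_\epsilon(t)) - \Xi(u^\star(t))\bigr]dW(t).
\end{equation*}
I would apply the It\^o formula to $e(t)\|\delta u(t)\|^2$, noting $e'(t) = -(K+\rho(u^\star(t)))e(t)$, and split the drift contribution from $B$ as
\begin{equation*}
B(u_\epsilon,\Phi_\epsilon) - B(u^\star,\Phi^\star) = \bigl[B(u_\epsilon,\Phi_\epsilon) - B(u^\star,\Phi_\epsilon)\bigr] + \bigl[B(u^\star,\Phi_\epsilon) - B(u^\star,\Phi^\star)\bigr].
\end{equation*}

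The local monotonicity \textbf{A2}, applied at the common control $\Phi_\epsilon$, controls the first bracket together with the diffusion term via
\begin{equation*}
2\langle B(u_\epsilon,\Phi_\epsilon) - B(u^\star,\Phi_\epsilon), \delta u\rangle + \|\Xi(u_\epsilon)-\Xi(u^\star)\|_2^2 \leq (K + \rho(u^\star))\|\delta u\|^2,
\end{equation*}
and this $(K+\rho(u^\star))\|\delta u\|^2$ defect is exactly what the $e'(t)\|\delta u\|^2$ contribution is engineered to cancel --- this is the whole point of the weight $e(t)$. The coercivity of \textbf{A3} contributes $2\langle A(t)\delta u,\delta u\rangle \leq -\theta\|\delta u\|_V^2$. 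For the second bracket, \textbf{A11} (setting $u=v$ in the stated inequality) forces $B_\Phi(u^\star,\cdot)$ to have operator norm at most $K_5$, so by the mean value theorem $\|B(u^\star,\Phi_\epsilon) - B(u^\star,\Phi^\star)\|_{V'} \leq K_5\epsilon\|\Phi\|_{\mathcal{O}}$, and Young's inequality absorbs the resulting cross term against half of the coercive bound. Collecting everything,
\begin{equation*}
d\bigl(e(t)\|\delta u(t)\|^2\bigr) \leq -\frac{\theta}{2}e(t)\|\delta u(t)\|_V^2\,dt + C\epsilon^2\,e(t)\|\Phi(t)\|_{\mathcal{O}}^2\,dt + d\mathcal{M}(t),
\end{equation*}
where $\mathcal{M}$ is a local martingale coming from the $\Xi$-difference stochastic integral.

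To conclude, I would localize with the stopping times $\mathcal{T}_M$ from Section 3 to kill the expectation of $\mathcal{M}$, integrate from $0$ to $t\wedge\mathcal{T}_M$, take expectations, use $e(\cdot)\leq 1$ together with the admissibility of $\Phi^\star+\Phi$ (which yields $\Phi \in L^2(\Omega\times\mathbb{T};\mathcal{O})$), and then send $M\to\infty$ by Fatou to obtain
\begin{equation*}
\sup_{t\in\mathbb{T}} \mathbb{E}\bigl(e(t)\|\delta u(t)\|^2\bigr) \leq C\epsilon^2\,\mathbb{E}\int_0^T\|\Phi(s)\|_{\mathcal{O}}^2\,ds = O(\epsilon^2).
\end{equation*}
The main obstacle is this integrability step: because $\rho$ is only locally bounded in $V$, neither $\mathcal{M}$ nor the product $e(t)\|\delta u(t)\|^2$ is a priori in $L^1$, so the weight $e(t)$ is doing double duty --- eliminating the local-monotonicity defect \emph{and} keeping the estimate uniform on $\mathbb{T}$ after the stopping-time limit. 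The uniform $L^2$ bound of Theorem \ref{exsth} applied to both $u^\star$ and $u_\epsilon$ then guarantees that nothing escapes to infinity in the $M\to\infty$ passage.
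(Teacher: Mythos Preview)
Your proposal is correct and follows essentially the same approach as the paper: apply It\^o's formula to $e(t)\|u_\epsilon - u^\star\|^2$, split the $B$-difference into a state-variation piece controlled by the local monotonicity \textbf{A2} (which the weight $e(t)$ is designed to cancel) and a control-variation piece of size $O(\epsilon)$ via \textbf{A11}, and use the coercivity \textbf{A3} to absorb the cross term. The only cosmetic differences are that the paper splits as $[B(u_\epsilon,\Phi_\epsilon)-B(u_\epsilon,\Phi^\star)] + [B(u_\epsilon,\Phi^\star)-B(u^\star,\Phi^\star)]$ rather than your order (immaterial, since the \textbf{A2} bound depends only on $v_2=u^\star$ and the \textbf{A11} bound is uniform in $u$), and that the paper takes expectations directly while you are more explicit about the stopping-time localization needed to justify $\mathbb{E}[\mathcal{M}]=0$.
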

 \begin{proof} Using the It\^o's formula and taking the expectation,  we get
 $$\begin{array}{rl}
 \mathbb{E}(e(t)\|\!\!\!\!&(u_{\epsilon}-u^{\star})(t)\|^2)\leq\\
 &2\mathbb{E}(\int_0^te(s)\langle B(u_{\epsilon}(s),\Phi_{\epsilon}(s))-B(u^{\star}(s),\Phi^{\star}(s)),u_{\epsilon}-u^{\star}(t)\rangle ds))+\\
&2\mathbb{E}(\int_0^te(s)\langle A(s)(u_{\epsilon}-u^{\star})(s),(u_{\epsilon}-u^{\star})(s)\rangle ds)+\\
&\mathbb{E}(\int_0^te(s)\|\Xi(u_{\epsilon}(s),\Phi_{\epsilon}(s))-\Xi(u^{\star}(s),\Phi^{\star}(s))\|^2_2ds)+\\
&2\mathbb{E}(\int_0^te'(s))\|(u_{\epsilon}-u^{\star})(t)\|^2ds)\leq\\
&2\mathbb{E}(\int_0^te(s)\langle B(u_{\epsilon}(s)),\Phi_{\epsilon}(s))-B(u_{\epsilon}(s),\Phi^{\star}(s)),(u_{\epsilon}-u^{\star})(s)\rangle ds))+\\
&2\mathbb{E}(\int_0^te(s)\langle B(u_{\epsilon}(s),\Phi^{\star}(s))-B(u^{\star}(s)),(\Phi^{\star}(s),(u_{\epsilon}-u^{\star})(s)\rangle ds))+\\
&\mathbb{E}(\int_0^te'(s)\|(u_{\epsilon}-u^{\star})(s)\|^2ds)+\mathbb{E}(\int_0^te(s)\|\Xi(u_{\epsilon}(s))-\Xi(u^{\star}(s))\|^2_2ds)+\\
&2\mathbb{E}(\int_0^t\langle A(s)(u_{\epsilon}-u^{\star})(s),(u_{\epsilon}-u^{\star})(s)\rangle ds)
\end{array}
 $$
Since {\bf A11}, we have 
$$\begin{array}{rl}
\mathbb{E}(\int_0^t\|B(u_{\epsilon}(s))\!\!\!\!&,\Phi_{\epsilon}(s))-B(u_{\epsilon}(s),\Phi^{\star}(s))\|^2_{V'}ds)=\\
\mathbb{E}(\int_0^t\|\int_0^1\!\!\!\!&B_{\Phi}(u_{\epsilon}(s),\Phi^{\star}(s)+r(\Phi_{\epsilon}(s)-\Phi^{\star}(s)))(\Phi_{\epsilon}(s)-\Phi^{\star}(s))dr\|^2_{V'}ds)\leq\\
&K_5\epsilon^2\mathbb{E}(\int_o^t\|\Phi(t)\|^2_{\mathcal{O}}dt).
\end{array}
$$ 
Combining these two inequalities above,  {\bf A2} and {\bf A3},  we have
$$
 \mathbb{E}(e(t)\|(u_{\epsilon}-u^{\star})(t)\|^2)+\frac{\theta}{2}\mathbb{E}(\int_0^te(s)\|(u_{\epsilon}-u^{\star})(s)\|^2_Vds)\leq\epsilon^2C,\\
 $$
 where $C$ represents a generic constant. Thus, (\ref{estimat3}) follows from the last inequality.
\end{proof}
 \cqd\\
 \indent Let $0\leq r\leq1$ and $0<\epsilon<1.$ Set $e_1(t):=e^{-\int_0^t(\rho_1(u^{\star}(s)+r(u_{\epsilon}(s)-u^{\star}(s))))^2ds}$ we need to prove the property
\begin{equation} \label{specP}
 \mathbb{E}(e_1^{-4}(T)<\infty.
\end{equation}
It is possible to demonstrate that under special assumptions  this property is satisfied. To prove it we use a similar argument as in \cite{HL}, subsection  2.5. 
 \begin{lemma} With the notation above, let $K$ and $\theta$ the constants appearing in {\bf A3}. Suppose that $\gamma:=\sup_{u\in H}\|\Xi(u)\|^2_2<\infty$ and $(K-2\theta)^2-48\gamma c_{HV}^2>0.$ Then, the property (\ref{specP}) is satisified.
 \end{lemma}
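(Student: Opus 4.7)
The plan is to reduce $\mathbb{E}(e_1^{-4}(T))<\infty$ to an exponential-moment bound on $\int_0^T\|u^\Phi(s)\|^2\,ds$ for solutions of (\ref{coeq1}), which is then obtained via the energy inequality coming from It\^o's formula combined with the exponential-supermartingale trick, in the spirit of Subsection~2.5 of \cite{HL}; the quantitative hypothesis $(K-2\theta)^2>48\gamma c_{HV}^2$ will enter precisely to close the resulting fixed-point inequality.

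To perform the reduction, I would use the standing assumption $\rho_1(u)\leq\|u\|$ (under which Theorem~\ref{TEUC} applies) together with the triangle inequality applied to $w(s):=u^\star(s)+r(u_\epsilon(s)-u^\star(s))$ to estimate
$$e_1^{-4}(T)=\exp\!\Bigl(4\!\int_0^T\!\rho_1(w(s))^2\,ds\Bigr)\leq\exp\!\Bigl(C_1\!\int_0^T\!(\|u^\star(s)\|^2+\|u_\epsilon(s)\|^2)\,ds\Bigr)$$
for an explicit constant $C_1$ independent of $\epsilon$ and $r$. A Cauchy--Schwarz split on the expectation then reduces the claim to showing $\mathbb{E}\bigl(\exp(\lambda\!\int_0^T\!\|u^\Phi(s)\|^2\,ds)\bigr)<\infty$ for $\Phi\in\{\Phi^\star,\Phi_\epsilon\}$ and $\lambda$ determined by $C_1$.

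For this exponential moment I would apply It\^o's formula to $\|u^\Phi(t)\|^2$ and invoke the coercivity condition \textbf{A3} to deduce
$$\|u^\Phi(t)\|^2+\theta\!\int_0^t\!\|u^\Phi(s)\|_V^2\,ds\leq\|u_0\|^2+K\!\int_0^t\!\|u^\Phi(s)\|^2\,ds+\!\int_0^t\!f(s)\,ds+M(t),$$
where $M(t):=2\!\int_0^t\!(u^\Phi(s),\Xi(u^\Phi(s))\,dW(s))$ is a continuous martingale whose quadratic variation satisfies $\langle M\rangle_t\leq 4\gamma\!\int_0^t\!\|u^\Phi(s)\|^2\,ds$ by virtue of $\|\Xi(u)\|_2^2\leq\gamma$. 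Using $\|u\|^2\leq c_{HV}\|u\|_V^2$ to absorb the term $K\!\int\!\|u\|^2$ into the coercive term rearranges this into an inequality of the form $c_0\!\int_0^T\!\|u^\Phi\|^2\,ds\leq D_0+M(T)$, and combining with the exponential-supermartingale estimate $\mathbb{E}(e^{\mu M(T)})\leq(\mathbb{E}(e^{\mu^2\langle M\rangle_T}))^{1/2}$ produces a self-referential inequality $F(\lambda)\leq A\,F(B\lambda^2)^{1/2}$ for $F(\lambda):=\sup_\Phi\mathbb{E}(\exp(\lambda\!\int_0^T\!\|u^\Phi\|^2\,ds))$. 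The hypothesis $(K-2\theta)^2>48\gamma c_{HV}^2$ is exactly what guarantees that this admits a finite solution at the $\lambda$ dictated by $C_1$.

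The main obstacle is tracking the constants through the two-step reduction: the factor $48$ in the hypothesis emerges from the interplay of the triangle-inequality constant in bounding $\|w\|^2$, the squaring of $\mu$ in the supermartingale step, the factor $4$ in $\langle M\rangle\leq 4\gamma\!\int\!\|u\|^2$, and the ratio $c_{HV}$ used to absorb $K\|u\|^2$ into $\theta\|u\|_V^2$. Verifying that the resulting quadratic fixed-point inequality has an admissible root at the necessary $\lambda$ is the quantitative heart of the proof; once this is done, Cauchy--Schwarz completes the bound on $\mathbb{E}(e_1^{-4}(T))$.
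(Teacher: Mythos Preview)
Your sketch is correct and is precisely the argument the paper invokes: the paper's own proof consists solely of the sentence ``The proof is similar as in the proof of the Theorem~2.5.1 in \cite{HL},'' and your outline --- reduction via $\rho_1(u)\leq\|u\|$ and Cauchy--Schwarz to an exponential moment of $\int_0^T\|u^\Phi\|^2\,ds$, the It\^o/coercivity energy inequality from \textbf{A3}, and the exponential-supermartingale self-referential estimate closed by the numerical hypothesis --- is exactly that method. The constant-tracking you flag as the crux is indeed where the combination $(K-2\theta)^2>48\gamma c_{HV}^2$ is pinned down, and there is nothing further to compare.
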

 \begin{proof} The proof is similar as in the proof of the Theorem 2.5.1 in \cite{HL}.
 \end{proof}\cqd\\
 Now we will return to estimates to obtain the maximum principle.  
 \begin{lemma}\label{estimatLemma} Assume the conditions of the last lemma and {\bf A5}-{\bf A12}.  Let $\Delta_{\epsilon}(t)=\frac{u_{\epsilon}(t)-u^{\star}(t)-\epsilon P(t)}{\epsilon}$ where $P$ is the solution to the equation (\ref{linequ}). Then,
\begin{equation}\label{estimat4}
\lim_{\epsilon\rightarrow0}\sup_{t\in\mathbb{T}}\mathbb{E}(\|\Delta_{\epsilon}(t)\|^2)=0
\end{equation}
 \end{lemma}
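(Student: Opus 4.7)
The plan is to derive an evolution equation for $\Delta_{\epsilon}$, apply It\^o's formula to a weighted norm $e_1(t)\|\Delta_{\epsilon}(t)\|^2$ (so that A9 and the coercivity A3 absorb the linear bad terms), and then show that the Taylor remainders generated by subtracting the linearization are $o(1)$ in $L^2$, thanks to Lemma \ref{estpm} and the integrability property (\ref{specP}). First, by subtracting the equation in (\ref{coeq1}) for $u_{\epsilon}$ and $u^{\star}$ and the equation (\ref{linequ}) for $\epsilon P$, one obtains, after writing
$$B(u_{\epsilon},\Phi_{\epsilon})-B(u^{\star},\Phi^{\star})=\int_0^1 B_u(u^{\star}+r(u_{\epsilon}-u^{\star}),\Phi_{\epsilon})(u_{\epsilon}-u^{\star})\,dr+\int_0^1 B_{\Phi}(u^{\star},\Phi^{\star}+r\epsilon\Phi)(\epsilon\Phi)\,dr$$
and analogously for $\Xi$, the SDE
$$d\Delta_{\epsilon}(t)=\bigl[A(t)\Delta_{\epsilon}+B_u(u^{\star},\Phi^{\star})(\Delta_{\epsilon})+R_1^{\epsilon}+R_2^{\epsilon}\bigr]dt+\bigl[\Xi'(u^{\star})(\Delta_{\epsilon})+R_3^{\epsilon}\bigr]dW,\quad \Delta_{\epsilon}(0)=0,$$
where $R_1^{\epsilon}$ collects the difference $\frac{1}{\epsilon}\int_0^1[B_u(u^{\star}+r(u_{\epsilon}-u^{\star}),\Phi_{\epsilon})-B_u(u^{\star},\Phi^{\star})](u_{\epsilon}-u^{\star})\,dr$, $R_2^{\epsilon}$ collects $\int_0^1[B_{\Phi}(u^{\star},\Phi^{\star}+r\epsilon\Phi)-B_{\Phi}(u^{\star},\Phi^{\star})](\Phi)\,dr$, and $R_3^{\epsilon}$ is the analogous Taylor remainder for $\Xi$.

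Second, I would apply It\^o's formula to $e_1(t)\|\Delta_{\epsilon}(t)\|^2$, where $e_1(t)=\exp(-\int_0^t[1+\theta_1(\rho_1(u^{\star}(s)))^2]\,ds)$. By the coercivity in A3 together with A9 applied to $v_1=u^{\star}$ and $v=\Delta_{\epsilon}$, the deterministic contribution of the linear part $A\Delta_{\epsilon}+B_u(u^{\star},\Phi^{\star})(\Delta_{\epsilon})$ combined with $\|\Xi'(u^{\star})(\Delta_{\epsilon})\|_2^2$ is dominated by $-\theta\|\Delta_{\epsilon}\|_V^2+\theta_1(\rho_1(u^{\star}))^2\|\Delta_{\epsilon}\|^2$, and the latter term cancels against the derivative of $e_1$. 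Therefore, after taking expectation,
$$\mathbb{E}\bigl(e_1(t)\|\Delta_{\epsilon}(t)\|^2\bigr)+\tfrac{\theta}{2}\mathbb{E}\int_0^t e_1(s)\|\Delta_{\epsilon}(s)\|_V^2\,ds\le C\,\mathbb{E}\int_0^t e_1(s)\bigl(\|R_1^{\epsilon}\|_{V'}^2+\|R_2^{\epsilon}\|_{V'}^2+\|R_3^{\epsilon}\|_2^2\bigr)\,ds+C\,\mathbb{E}\int_0^t e_1(s)\|\Delta_{\epsilon}\|^2\,ds,$$
after using Cauchy--Schwarz and Young's inequality on the cross terms involving the remainders, and Gronwall then reduces the problem to showing that the right-hand remainder integral is $o(1)$.

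Third, I would estimate the remainders. For $R_1^{\epsilon}$, A8 together with the standing hypothesis $\rho_1(u)\le\|u\|$ yields $\|R_1^{\epsilon}\|_{V'}\le\frac{K_3}{2\epsilon}\|u_{\epsilon}-u^{\star}\|^2$, so $\|R_1^{\epsilon}\|_{V'}^2\le\frac{C}{\epsilon^2}\|u_{\epsilon}-u^{\star}\|^4$; for $R_2^{\epsilon}$, A11 gives $\|R_2^{\epsilon}\|_{V'}\le K_5\epsilon\|\Phi\|_{\mathcal{O}}$; for $R_3^{\epsilon}$, A12 gives $\|R_3^{\epsilon}\|_2\le\frac{K_6}{2\epsilon}\|u_{\epsilon}-u^{\star}\|^2$. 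Combining these bounds,
$$\mathbb{E}\int_0^T e_1(s)\bigl(\|R_1^{\epsilon}\|_{V'}^2+\|R_3^{\epsilon}\|_2^2\bigr)\,ds\le\frac{C}{\epsilon^2}\,\mathbb{E}\int_0^T e_1(s)\|u_{\epsilon}(s)-u^{\star}(s)\|^4\,ds.$$
Using H\"older's inequality with the weight decomposition $e_1=e_1\cdot e^{1/2}\cdot e^{-1/2}$ and the fact that $\mathbb{E}(e_1^{-4}(T))<\infty$ from (\ref{specP}), one factors out the weights and applies Lemma \ref{estpm}, which gives $\mathbb{E}\|u_{\epsilon}-u^{\star}\|^4=O(\epsilon^4)$ after a further higher-moment variant (obtained by repeating the proof of Lemma \ref{estpm} applied to $\|\cdot\|^4$, as in (\ref{estimatlinequ})). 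Consequently the right-hand side is $O(\epsilon^2)$, which together with the $O(\epsilon^2)$ bound for $R_2^{\epsilon}$ and Gronwall yields $\sup_{t\in\mathbb{T}}\mathbb{E}(e_1(t)\|\Delta_{\epsilon}(t)\|^2)=O(\epsilon^2)$; a final application of H\"older using (\ref{specP}) to remove the weight $e_1$ produces (\ref{estimat4}). The main obstacle I anticipate is the last step: the factor $\epsilon^{-2}\|u_{\epsilon}-u^{\star}\|^4$ in the $R_1^{\epsilon}$ estimate is delicate, as it requires the \emph{fourth}-moment analog of Lemma \ref{estpm} together with the integrability property (\ref{specP}) to keep the weights controlled when removing them at the end.
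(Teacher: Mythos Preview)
Your overall strategy coincides with the paper's: derive an SDE for $\Delta_\epsilon$, apply It\^o's formula with an exponential weight so that {\bf A9} and the coercivity {\bf A3} absorb the linear $B_u$--term, and show the Taylor remainders vanish. There is one structural difference worth noting. You linearize $B$ at $u^\star$ and push the full difference $[B_u(u^\star+r(u_\epsilon-u^\star),\Phi_\epsilon)-B_u(u^\star,\Phi^\star)](u_\epsilon-u^\star)$ into $R_1^\epsilon$, which after Young gives $\epsilon^{-2}\|u_\epsilon-u^\star\|^4$ and forces a \emph{fourth}-moment version of Lemma~\ref{estpm}. The paper instead keeps $\int_0^1 B_u(u^\star+r(u_\epsilon-u^\star),\Phi_\epsilon)(\Delta_\epsilon)\,dr$ as the ``linear'' part (hence its weight $e_1$ sits at the intermediate point $u^\star+r(u_\epsilon-u^\star)$) and puts only $[B_u(\cdot)-B_u(u^\star,\Phi^\star)](P)$ into the remainder; that remainder is controlled by $(\mathbb{E}\sup_t\|P\|^4)^{1/2}(\mathbb{E}(\int e_1\|u_\epsilon-u^\star\|^2)^2)^{1/2}$, so the paper only needs the \emph{second}-moment Lemma~\ref{estpm} together with the fourth moment of $P$ already provided by Theorem~\ref{TEUC}. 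Your splitting works too, but it transfers the higher-moment burden from $P$ to $u_\epsilon-u^\star$, and that fourth-moment lemma is not actually proved anywhere in the paper.

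The genuine gap is your last step. From $\sup_t\mathbb{E}(e_1(t)\|\Delta_\epsilon(t)\|^2)\to 0$ and $\mathbb{E}(e_1^{-4}(T))<\infty$ alone you \emph{cannot} conclude $\mathbb{E}\|\Delta_\epsilon(t)\|^2\to 0$: writing $\|\Delta_\epsilon\|^2=e_1^{-1}\cdot e_1\|\Delta_\epsilon\|^2$ and applying H\"older with $e_1^{-1}\in L^4$ requires $e_1\|\Delta_\epsilon\|^2\in L^{4/3}$, which you have not established. The paper closes this by additionally proving the fourth-moment weighted bound $\mathbb{E}\bigl(e_1^2(t)\|\Delta_\epsilon(t)\|^4\bigr)\to 0$ (obtained by repeating the It\^o argument for $\|\Delta_\epsilon\|^4$) and then using the three-factor H\"older inequality
\[
\mathbb{E}\|\Delta_\epsilon(t)\|^2\;\le\;\bigl(\mathbb{E}(e_1\|\Delta_\epsilon\|^2)\bigr)^{1/2}\,\bigl(\mathbb{E}(e_1^{-4})\bigr)^{1/4}\,\bigl(\mathbb{E}(e_1^2\|\Delta_\epsilon\|^4)\bigr)^{1/4},
\]
which is precisely where (\ref{specP}) is used. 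Your proposal is missing this weighted fourth-moment control of $\Delta_\epsilon$; without it the weight-removal step does not close.
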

 \begin{proof} According to the notation we have
 $$\begin{array}{rl}
 \Delta_{\epsilon}(t)&=\int_0^tA(s)\Delta_{\epsilon}(s)ds+\\
 &\int_0^t\frac{1}{\epsilon}(B(u_{\epsilon}(s),\Phi_{\epsilon}(s))-B(u^{\star}(s),\Phi^{\star}(s)))-B_u(u^{\star}(s),\Phi^{\star}(s))P(s)ds+\\
 &\int_0^t\frac{1}{\epsilon}(\Xi(u_{\epsilon}(s))-\Xi(u^{\star}(s)))-\Xi_u(u^{\star}(s))P(s))dW(s)-\\
 &\int_0^t B_{\Phi}(u^{\star}(s),\Phi^{\star}(s))\Phi(s)ds=\int_0^tA(s)\Delta_{\epsilon}(s)ds+\\
 &\int_0^t\frac{1}{\epsilon}(B(u_{\epsilon}(s),\Phi_{\epsilon}(s))-B(u^{\star}(s),\Phi_{\epsilon}(s)))-B_u(u^{\star}(s),\Phi^{\star}(s))P(s)ds+\\
 &\int_0^t\frac{1}{\epsilon}(B(u^{\star}(s),\Phi_{\epsilon}(s))-B(u^{\star}(s),\Phi^{\star}(s)))-B_{\Phi}(u^{\star}(s),\Phi^{\star}(s))\Phi(s)ds+\\
 &\int_0^t\frac{1}{\epsilon}(\Xi(u_{\epsilon}(s))-\Xi(u^{\star}(s)))-\Xi_u(u^{\star}(s))P(s))dW(s),
 \end{array}
 $$
 for $t\in\mathbb{T}.$ Using the It\^o's formula with $e_1(t)=e^{-\int_0^t(\rho_1(u^{\star}(s)+r(u_{\epsilon}(s)-u^{\star}(s))))^2ds}$ for $r\in[0,1]$and taking the expectation, we have
 $$\begin{array}{rl}
 \mathbb{E}(e_1(t)\|\Delta_{\epsilon}(t)\|^2)+\theta\mathbb{E}(\int_0^te_1(s)\|\Delta_{\epsilon}(s)\|^2_Vds)&\leq\mathbb{I}+\mathbb{II}+\mathbb{III}+\\
 &\mathbb{E}(\int_0^te_1'(s)\|\Delta_{\epsilon}(s)\|^2ds),
 \end{array}
 $$
 where 
 $$\begin{array}{rl}
 \mathbb{I}=&2\mathbb{E}(\int_0^te_1(s)\langle \frac{1}{\epsilon}(B(u_{\epsilon}(s),\Phi_{\epsilon}(s))-\\
 &B(u^{\star}(s),\Phi_{\epsilon}(s)))-B_u(u^{\star}(s),\Phi^{\star}(s))P(s), \Delta_{\epsilon}(s)\rangle)ds),\\
 \mathbb{II}=&2\mathbb{E}(\int_0^te_1(s)\langle \frac{1}{\epsilon}(B(u^{\star}(s),\Phi_{\epsilon}(s))-\\
 &B(u^{\star}(s),\Phi^{\star}(s)))-B_{\Phi}(u^{\star}(s),\Phi^{\star}(s))\Phi(s),\Delta_{\epsilon}(s)\rangle ds)\\
 \mathbb{III}=&\mathbb{E}(\int_0^te_1(s)\|\frac{1}{\epsilon}(\Xi(u_{\epsilon}(s))-\Xi(u^{\star}(s)))-\Xi_u(u^{\star}(s))P(s)\|^2_2ds)
 \end{array}
 $$
We need to estimate each term above. Using our assumption about $\rho_1$ we get  
$$\begin{array}{rl}
\mathbb{I}&=2\mathbb{E}(\int_0^te_1(s)\langle \frac{1}{\epsilon}\int_0^1B_u(u^{\star}(s)+r(u_{\epsilon}(s)-u^{\star}(s)),\Phi_{\epsilon}(s))(u_{\epsilon}(s)-u^{\star}(s))dr, \\
&\Delta_{\epsilon}(s)\rangle ds)+2\mathbb{E}(\int_0^te_1(s)\langle-B_u(u^{\star}(s),\Phi^{\star}(s))P(s), \Delta_{\epsilon}(s)\rangle ds)=\\
&2\mathbb{E}(\int_0^te_1(s)\langle\int_0^1B_u(u^{\star}(s)+r(u_{\epsilon}(s)-u^{\star}(s)),\Phi_{\epsilon}(s))(\Delta_{\epsilon}(s)), \Delta_{\epsilon}(s)\rangle ds)+\\
&2\mathbb{E}(\int_0^te_1(s)\langle\int_0^1 B_u(u^{\star}(s)+r(u_{\epsilon}(s)-u^{\star}(s)),\Phi_{\epsilon}(s))P(s)- B_u(u^{\star}(s),\Phi^{\star}(s))P(s),\\
&\Delta_{\epsilon}(s)\rangle drds)=\\
&2\mathbb{E}(\int_0^te_1(s)\langle\int_0^1B_u(u^{\star}(s)+r(u_{\epsilon}(s)-u^{\star}(s)),\Phi_{\epsilon}(s))(\Delta_{\epsilon}(s)),\Delta_{\epsilon}(s)\rangle drds)+\\
&2\mathbb{E}(\int_0^te_1(s)\langle \int_0^1B_u(u^{\star}(s)+r(u_{\epsilon}(s)-u^{\star}(s)),\Phi_{\epsilon}(s))P(s)-B_u(u^{\star}(s),\Phi^{\star}(s))P(s),\\
& \Delta_{\epsilon}(s)\rangle drds)\leq\\
&\frac{\theta}{4}\mathbb{E}(\int_0^te_1(s)\|\Delta_{\epsilon}(s)\|^2_Vds)+\frac{8K_2^2}{\theta}\mathbb{E}(\int_0^te_1(s)(\rho_1(u^{\star}(s)+r(u_{\epsilon}(s)-u^{\star}(s))))^2\times\\
&\|\Delta_{\epsilon}(s)\|^2ds)+\\
&\frac{8}{\theta}(\mathbb{E}(\sup_{t\in\mathbb{T}}\|P(t)\|^4))^{1/2(}\mathbb{E}(\int_0^te_1(s)\|(u_{\epsilon}-u^{\star})(s)\|^2ds)^2)^{1/2}.
\end{array}
$$
\indent About the second term
$$\begin{array}{rl}
\mathbb{II}&=2\mathbb{E}(\int_0^te_1(s)\langle\int_0^1(B_{\Phi}(u^{\star}(s),\Phi^{\star}+r(\Phi_{\epsilon}(s)-\Phi^{\star}(s)))-\\
&B_{\Phi}(u^{\star}(s),\Phi^{\star}(s))(\Phi(s)), \Delta_{\epsilon}(s)\rangle drds)\leq\\
&\frac{4}{\theta}\mathbb{E}(\int_0^te_1(s)\int_0^1\|B_{\Phi}(u^{\star}(s),\Phi^{\star}+r(\Phi_{\epsilon}(s)-\Phi^{\star}(s)))-\\
&B_{\Phi}(u^{\star}(s),\Phi^{\star}(s))(\Phi(s))\|^2_{V'}drds)+\\
&\frac{\theta}{4}\mathbb{E}(\int_0^te_1(s)\|\Delta_{\epsilon}(s)\|^2_Vds).
\end{array} 
$$
Now we will estimate the third term
$$\begin{array}{rl}
\mathbb{III}&\leq\mathbb{E}(\int_0^te_1(s)\|\int_0^1\Xi_u(u^{\star}(s)+r(u_{\epsilon}(s)-u^{\star}(s)))(\Delta_{\epsilon}(s))dr\|^2_2ds)+\\
&\mathbb{E}(\int_0^te_1(s)\|\int_0^1\Xi_u(u^{\star}(s)+r(u_{\epsilon}(s)-u^{\star}(s)))(P(s))-\\
&\Xi_u(u^{\star}(s))(P(s))dr\|^2_2ds)\leq\\
&\mathbb{E}(\int_0^te_1(s)\|\int_0^1\Xi_u(u^{\star}(s)+r(u_{\epsilon}(s)-u^{\star}(s)))(P(s))-\\
&\Xi_u(u^{\star}(s))(P(s))dr\|^2_2ds)+\\
&K_6\mathbb{E}(\int_0^te_1(s)\|\Delta_{\epsilon}(s)\|^2ds).
\end{array}
$$
Thus,
\begin{equation}\label{estimafinal1}\begin{array}{rl}
 \mathbb{E}(e_1(t)&\|\Delta_{\epsilon}(t)\|^2)+\frac{\theta}{2}\mathbb{E}(\int_0^te_1(s)\|\Delta_{\epsilon}(s)\|^2_Vds)\leq\\
 &\frac{4}{\theta}\mathbb{E}(\int_0^te_1(s)\int_0^1\|B_{\Phi}(u^{\star}(s),\Phi^{\star}+r(\Phi_{\epsilon}(s)-\\
 &\Phi^{\star}(s)))-B_{\Phi}(u^{\star}(s),\Phi^{\star}(s))(\Phi(s))\|^2_{V'}drds)+\\
 &\frac{8}{\theta}(\mathbb{E}(\sup_{t\in\mathbb{T}}\|P(t)\|^4))^{1/2}(\mathbb{E}(\int_0^te_1(s)\|u_{\epsilon}(s)-u^{\star}(s)\|^2ds)^2)^{1/2}+\\
&\mathbb{E}(\int_0^te_1(s)\|\int_0^1\Xi_u(u^{\star}(s)+r(u_{\epsilon}(s)-u^{\star}(s)))(P(s))-\\
&\Xi_u(u^{\star}(s))(P(s))dr\|^2_2ds)+K_6\mathbb{E}(\int_0^te_1(s)\|\Delta_{\epsilon}(s)\|^2ds).
 \end{array}
 \end{equation}
\indent Using the Lemma \ref{estpm}, it is possible to demonstrate that $\mathbb{E}(\int_0^te_1(s)\|u_{\epsilon}(s)-u^{\star}(s)\|^2ds)^2=O(\epsilon^2).$ Also, using the assumptions {\bf A11}-{\bf A12} and dominated convergence it is possible to demonstrate that $\mathbb{E}(\int_0^t\|\int_0^1\Xi_u(u^{\star}(s)+r(u_{\epsilon}(s)-u^{\star}(s)))(P(s))-\Xi_u(u^{\star}(s))(P(s))dr\|^2_2ds\rightarrow0$ and $\mathbb{E}(\int_0^te_1(s)\int_0^1\|B_{\Phi}(u^{\star}(s),\Phi^{\star}+r(\Phi_{\epsilon}(s)-\Phi^{\star}(s)))-B_{\Phi}(u^{\star}(s),\Phi^{\star}(s))(\Phi(s))\|^2_{V'}drds)\rightarrow0$ as $\epsilon\rightarrow0^+.$ Combining these facts in the inequality (\ref{estimafinal1}) and Gronwall's inequality we obtain
\begin{equation}\label{estimatfinal2}
\sup_{t\in\mathbb{T}}\mathbb{E}(e_1(t)\|\Delta_{\epsilon}(t)\|^2)\rightarrow0,\ \textmd{as } \epsilon\rightarrow0^+.
\end{equation}
Analogously we can demonstrate
\begin{equation}\label{estimatfinall3}
\mathbb{E}((e_1(t))^2\|\Delta_{\epsilon}(t)\|^4)\rightarrow0,\ \textmd{as } \epsilon\rightarrow0^+.
\end{equation}
In order to demonstrate (\ref{estimat4}) we observe that
$$\mathbb{E}(\|\Delta_{\epsilon}(t)\|^2)\leq(\mathbb{E}(e_1(t)\|\Delta_{\epsilon}(t)\|^2 ))^{1/2}(\mathbb{E}(e_1^{-4}(t)))^{1/4}(\mathbb{E}(e_1^2(t)\|\Delta_{\epsilon}(t)\|^4 ))^{1/4}$$
Hence, using (\ref{specP}), (\ref{estimatfinal2}) and (\ref{estimatfinall3}) we obtain (\ref{estimat4}).
\end{proof}\cqd
\begin{theorem} We assume {\bf A5}-{\bf A12}. Then, for each $\epsilon>0,$
\begin{equation}\label{estPrinMax}\begin{array}{rl}
\mathcal{J}(\Phi_{\epsilon})-\mathcal{J}(\Phi^{\star})&=
\epsilon\mathbb{E}(\mathcal{K}'(u^{\star}(T),P(T)))+\\
&\epsilon\mathbb{E}(\int_0^T(\mathcal{L}_u(u^{\star}(t),\Phi^{\star}(t)), P(t))dt)+\\
&\mathbb{E}(\int_0^T(\mathcal{L}(u^{\star}(t),\Phi_{\epsilon}(t))-\mathcal{L}(u^{\star}(t),\Phi^{\star}(t)))dt)+o(\epsilon).
\end{array}
\end{equation}
\end{theorem}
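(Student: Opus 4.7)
The plan is to start from the definition (\ref{cost}) and write
$$\mathcal{J}(\Phi_{\epsilon})-\mathcal{J}(\Phi^{\star})=\mathbb{E}\bigl[\mathcal{K}(u_{\epsilon}(T))-\mathcal{K}(u^{\star}(T))\bigr]+\mathbb{E}\int_0^T\bigl[\mathcal{L}(u_{\epsilon},\Phi_{\epsilon})-\mathcal{L}(u^{\star},\Phi^{\star})\bigr]dt,$$
and then telescope the running-cost integrand as
$$\mathcal{L}(u_{\epsilon},\Phi_{\epsilon})-\mathcal{L}(u^{\star},\Phi^{\star})=\bigl[\mathcal{L}(u_{\epsilon},\Phi_{\epsilon})-\mathcal{L}(u^{\star},\Phi_{\epsilon})\bigr]+\bigl[\mathcal{L}(u^{\star},\Phi_{\epsilon})-\mathcal{L}(u^{\star},\Phi^{\star})\bigr].$$
The second bracket reproduces the third term on the right of (\ref{estPrinMax}) verbatim, so it is left untouched; the first bracket and the $\mathcal{K}$-increment will be treated via a first-order Taylor expansion in $u$ only.

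Using the Fr\'echet differentiability from \textbf{H2}, I would write
$$\mathcal{L}(u_{\epsilon},\Phi_{\epsilon})-\mathcal{L}(u^{\star},\Phi_{\epsilon})=\int_0^1\bigl(\mathcal{L}_u(u^{\star}+r(u_{\epsilon}-u^{\star}),\Phi_{\epsilon}),\,u_{\epsilon}-u^{\star}\bigr)dr,$$
together with the analogous identity for $\mathcal{K}(u_{\epsilon}(T))-\mathcal{K}(u^{\star}(T))$. Then I would substitute $u_{\epsilon}-u^{\star}=\epsilon P+\epsilon\Delta_{\epsilon}$, which follows from the very definition of $\Delta_{\epsilon}$ in Lemma \ref{estimatLemma}. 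This splits each bracket cleanly into a ``principal'' $\epsilon$-linear part built from $P$ and a ``remainder'' part built from $\Delta_{\epsilon}$.

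For the remainder pieces, the bounds $\|\mathcal{L}_u(u,\Phi)\|\leq k_{\mathcal{L}}\|u\|$ and $\|\mathcal{K}'(u)\|\leq k_{\mathcal{K}}\|u\|$ from \textbf{H2}, combined with Cauchy--Schwarz, control them by
$$\epsilon\,k_{\mathcal{L}}\Bigl(\mathbb{E}\!\int_0^T\!(\|u^{\star}\|^2+\|u_{\epsilon}\|^2)dt\Bigr)^{1/2}\Bigl(\mathbb{E}\!\int_0^T\!\|\Delta_{\epsilon}\|^2dt\Bigr)^{1/2}$$
and a similar expression at $t=T$. Theorem \ref{exsth} keeps the first factor uniformly bounded in $\epsilon$, while Lemma \ref{estimatLemma} provides $\sup_{t\in\mathbb{T}}\mathbb{E}\|\Delta_{\epsilon}(t)\|^2\to 0$, so the entire remainder is $o(\epsilon)$. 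For the principal pieces, which take the form $\epsilon\,\mathbb{E}\int_0^T\!\int_0^1\bigl(\mathcal{L}_u(u^{\star}+r(u_{\epsilon}-u^{\star}),\Phi_{\epsilon}),P\bigr)dr\,dt$ and its terminal analogue, I would pass to the limit by dominated convergence: Lemma \ref{estpm} yields $u_{\epsilon}\to u^{\star}$ in $L^2(\Omega;H)$ uniformly in $t$ (hence a.s.\ along a subsequence) and $\Phi_{\epsilon}\to\Phi^{\star}$, while the majorant $k_{\mathcal{L}}(\|u^{\star}\|+\|u_{\epsilon}\|)\|P\|$ is integrable on $\Omega\times[0,T]\times[0,1]$ thanks to Theorem \ref{exsth} and the $P$-estimates (\ref{estimatlinequ}) of Theorem \ref{TEUC}. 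The resulting limits are precisely the first two terms of (\ref{estPrinMax}).

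The step I expect to be most delicate is the pointwise limit passage inside the $r$-integral, since the dominated convergence theorem requires a continuity property of the maps $(u,\Phi)\mapsto\mathcal{L}_u(u,\Phi)$ and $u\mapsto\mathcal{K}'(u)$ that is not spelled out in \textbf{H2}. One must either read the Fr\'echet differentiability hypothesis as carrying continuity of the derivative with it, or establish the needed convergence from the linear growth bounds on $\mathcal{L}_u$ and $\mathcal{K}'$ together with an $L^2$ estimate on the integrand, appealing to Vitali rather than to bare dominated convergence. Once this technical point is handled, the rest is bookkeeping: combine the telescoping, the Taylor formulas, and the decomposition $u_{\epsilon}-u^{\star}=\epsilon P+\epsilon\Delta_{\epsilon}$, and absorb every cross-term into the $o(\epsilon)$ remainder by means of Lemmas \ref{estpm} and \ref{estimatLemma}.
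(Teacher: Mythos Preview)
Your proposal is correct and follows essentially the same route as the paper: the same splitting of $\mathcal{J}(\Phi_\epsilon)-\mathcal{J}(\Phi^\star)$ into terminal and running pieces, the same telescoping of $\mathcal{L}(u_\epsilon,\Phi_\epsilon)-\mathcal{L}(u^\star,\Phi^\star)$, the same mean-value integral representation with $u_\epsilon-u^\star=\epsilon(P+\Delta_\epsilon)$, and the same appeal to Lemma \ref{estimatLemma} plus dominated convergence (invoking the boundedness and continuity of $\mathcal{L}_u$, $\mathcal{K}'$ from \textbf{H2}) to extract the limits. The only cosmetic difference is that you separate the $P$ and $\Delta_\epsilon$ contributions before taking the limit, while the paper keeps $\Delta_\epsilon+P$ together in the integrand and passes to the limit in one stroke; your flagged concern about the continuity of $\mathcal{L}_u$ and $\mathcal{K}'$ is exactly the point the paper addresses by citing ``the continuity and boundedness of $\mathcal{L}_u$ in \textbf{H2}''.
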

\proof Since
\begin{equation}\label{estimfunct}\begin{array}{rl}
\mathcal{J}(\Phi_{\epsilon})-\mathcal{J}(\Phi^{\star})=&\mathbb{E}(\mathcal{K}(u_{\epsilon})(T)-\mathcal{K}(u^{\star})(T))+\\
\mathbb{E}(\int_0^T\mathcal{L}(u_{\epsilon}(s),\Phi_{\epsilon}(s))-\mathcal{L}(u^{\star}(s),\Phi^{\star}(s)ds).
\end{array}
\end{equation}
We need to calculate each term above. Since
$$
\mathcal{K}(u_{\epsilon}(T))-\mathcal{K}(u^{\star}(T))=\int_0^1\epsilon(\mathcal{K}'(u^{\star}(T)+r\epsilon(\Delta _{\epsilon}(T)+P(T)), \Delta _{\epsilon}(T)+P(T)))dr
$$
which implies
$$
\frac{\mathcal{K}(u_{\epsilon}(T))-\mathcal{K}(u^{\star}(T))}{\epsilon}=\int_0^1(\mathcal{K}'(u^{\star}(T)+r\epsilon(\Delta _{\epsilon}(T)+P(T)), \Delta _{\epsilon}(T)+P(T)))dr.
$$
Using the Lemma ( \ref{estimatLemma}) and the dominated convergence theorem we have
$$
\mathbb{E}(\frac{\mathcal{K}((u_{\epsilon}(T))-\mathcal{K}(u^{\star}(T)))}{\epsilon})\rightarrow  \mathbb{E}(\mathcal{K}'(u^{\star}(T),P(T))), \textmd { as } \epsilon\rightarrow0^+.
$$
In particular, we have
\begin{equation}\label{estimat3}
\mathbb{E}(\mathcal{K}(u_{\epsilon})(T)-\mathcal{K}(u^{\star})(T))=\epsilon\mathbb{E}(\mathcal{K}'(u^{\star}(T),P(T)))+o(\epsilon).
\end{equation}
Now we need to estimate the second factor on the right hand side of (\ref{estimfunct}), first we will observe that
$$\begin{array}{rl}
\mathcal{L}(u_{\epsilon}\!\!\!\!&(t),\Phi_{\epsilon}(t))-\mathcal{L}(u^{\star}(t),\Phi^{\star}(t))=\\
\!\!\!&\mathcal{L}(u_{\epsilon}(t),\Phi_{\epsilon}(t))-\mathcal{L}(u^{\star}(t),\Phi_{\epsilon}(t))+\mathcal{L}(u^{\star}(t),\Phi_{\epsilon}(t))-\mathcal{L}(u^{\star}(t),\Phi^{\star}(t))=\\
&\int_0^1\epsilon((\mathcal{L}_u(u^{\star}(t)+r\epsilon(\Delta _{\epsilon}(t)+P(t)),\Phi^{\star}(t)+\epsilon\Phi(t)), \Delta _{\epsilon}(t)+P(t))dr+\\
&\mathcal{L}(u^{\star}(t),\Phi_{\epsilon}(t))-\mathcal{L}(u^{\star}(t),\Phi^{\star}(t)),
\end{array}
$$
for all $t\in\mathbb{T}.$\\
By applying Lemma  \ref{estimatLemma},  the continuity and boundedness of $\mathcal{L}_u$ in  {\bf H2} and the  dominated convergence theorem we have
$$\begin{array}{rl}
\mathbb{E}\!\!\!\!&(\int_0^T\int_0^1((\mathcal{L}_u(u^{\star}(t)+r\epsilon(\Delta _{\epsilon}(t)+P(t)),\Phi^{\star}(t)+r\epsilon\Phi(t)), \Delta _{\epsilon}(t)+P(t))drdt)\\
&\rightarrow\mathbb{E}(\int_0^T((\mathcal{L}_u(u^{\star}(t),\Phi^{\star}(t)),P(t))dt) \textmd{ as }\epsilon\rightarrow0^+.
\end{array}
$$
Then, 
\begin{equation}\label{esmt2}\begin{array}{rl}
\mathbb{E}&(\int_0^T\mathcal{L}(u_{\epsilon}(s),\Phi_{\epsilon}(s))-\mathcal{L}(u^{\star}(s),\Phi^{\star}(s))ds)=\\
&\epsilon\mathbb{E}(\int_0^T((\mathcal{L}_u(u^{\star}(t),\Phi^{\star}(t)),P(t))dt)+\\
&\mathbb{E}(\int_0^T(\mathcal{L}(u^{\star}(t),\Phi_{\epsilon}(t))-\mathcal{L}(u^{\star}(t),\Phi^{\star}(t)))dt)+o(\epsilon).
\end{array}
\end{equation}
The theorem follows from ( \ref{estimat3}) and \ref{esmt2}).
\cqd\\
The last theorem will play a crucial importance to get the maximum principle. In order to obtain this we need the following Lemmas, the proof follows the ideas of (\cite{AH2}).
\begin{lemma} Suppose that {\bf A1}-{\bf A12}, {\bf H1} and {\bf H2} are satisfied. Then, the solution $(v^{\star},Z^{\star})$ to the equation (\ref{BSE}) satisfies
\begin{equation}\label{inequPrM1}\begin{array}{rl}
\epsilon\mathbb{E}&\left(v^{\star}(T),P(T)\right)+\epsilon\mathbb{E}\left[\int_0^T \mathcal{L}_u(u^{\star}(t),\Phi^{\star}(t))P(t)dt 
\right]+\\
&\mathbb{E}\left[\int_0^T\mathcal{H}(u^{\star}(t),\Phi_{\epsilon}(t),v^{\star}(t),Z^{\star}(t))-\mathcal{H}(u^{\star}(t),\Phi^{\star}(t),v^{\star}(t),Z^{\star}(t)) dt\right]-\\
&\mathbb{E}\left[\int_0^T (v^{\star}(t),B(u^{\star}(t),\Phi_{\epsilon}(t))-B(u^{\star}(t),\Phi^{\star}(t)))dt\right]\geq o(\epsilon).
\end{array}
\end{equation}
\end{lemma}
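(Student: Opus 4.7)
The plan is to combine the optimality inequality $\mathcal{J}(\Phi_{\epsilon})-\mathcal{J}(\Phi^{\star})\geq 0$ with the expansion already obtained in the previous theorem, using It\^o's formula applied to the duality pairing $(v^{\star}(t),P(t))$ to convert the terminal term $\mathbb{E}(\mathcal{K}'(u^{\star}(T)),P(T))$ into an integral expression involving $B_{\Phi}(u^{\star},\Phi^{\star})\Phi$. The resulting integral can then be rewritten in terms of the Hamiltonian $\mathcal{H}$, producing exactly the inequality claimed.

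First I would compute $d(v^{\star}(t),P(t))$ via the It\^o product rule: the forward equation for $P$ is (\ref{linequ}), while $v^{\star}$ solves the backward equation (\ref{BSER}) with terminal value $\nabla\mathcal{K}(u^{\star}(T))$. Integrating between $0$ and $T$, taking expectation (so the stochastic integrals disappear by the integrability afforded by Theorem \ref{TEUC} and the estimate (\ref{estimequM})), and using $P(0)=0$ and $v^{\star}(T)=\nabla\mathcal{K}(u^{\star}(T))$, one gets
\begin{equation*}
\mathbb{E}(\nabla\mathcal{K}(u^{\star}(T)),P(T)) = \mathbb{E}\!\int_0^T\!\bigl[-\langle A^{\star}v^{\star},P\rangle - (\mathcal{L}_u,P) - \langle B_u(u^{\star},\Phi^{\star})P,v^{\star}\rangle - \langle \Xi_u(u^{\star})P,Z^{\star}\rangle_2\bigr]dt
\end{equation*}
plus the $(v^{\star},dP)$ and cross-variation contributions, namely $\mathbb{E}\int_0^T[\langle v^{\star},AP\rangle + (v^{\star},B_u(u^{\star},\Phi^{\star})P) + (v^{\star},B_{\Phi}(u^{\star},\Phi^{\star})\Phi)]dt$ together with the quadratic covariation $\mathbb{E}\int_0^T\langle Z^{\star},\Xi'(u^{\star})P\rangle_2\,dt$. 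The $A^{\star}/A$ terms cancel by definition of the adjoint, the two $B_u$ terms cancel, and the $\Xi_u$ and $\Xi'$ cross-variation terms cancel (since $\Xi'=\Xi_u$). One is left with
\begin{equation*}
\mathbb{E}(\nabla\mathcal{K}(u^{\star}(T)),P(T)) + \mathbb{E}\!\int_0^T (\mathcal{L}_u(u^{\star},\Phi^{\star}),P)\,dt = \mathbb{E}\!\int_0^T (v^{\star},B_{\Phi}(u^{\star},\Phi^{\star})\Phi)\,dt.
\end{equation*}

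Next, I substitute this identity into the expansion (\ref{estPrinMax}) provided by the previous theorem and use the definition of the Hamiltonian (\ref{hamilt}) to rewrite $\mathcal{L}(u^{\star},\Phi_{\epsilon})-\mathcal{L}(u^{\star},\Phi^{\star})$: since $\Xi$ does not depend on $\Phi$,
\begin{equation*}
\mathcal{L}(u^{\star},\Phi_{\epsilon})-\mathcal{L}(u^{\star},\Phi^{\star}) = [\mathcal{H}(u^{\star},\Phi_{\epsilon},v^{\star},Z^{\star})-\mathcal{H}(u^{\star},\Phi^{\star},v^{\star},Z^{\star})] - (v^{\star},B(u^{\star},\Phi_{\epsilon})-B(u^{\star},\Phi^{\star})).
\end{equation*}
Replacing $\mathbb{E}(\mathcal{K}'(u^{\star}(T)),P(T))$ by $\mathbb{E}(v^{\star}(T),P(T))$ and plugging the two rewrites into (\ref{estPrinMax}) yields an expression for $\mathcal{J}(\Phi_{\epsilon})-\mathcal{J}(\Phi^{\star})$ which matches the left-hand side of (\ref{inequPrM1}) up to $o(\epsilon)$. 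The optimality of $\Phi^{\star}$ gives $\mathcal{J}(\Phi_{\epsilon})-\mathcal{J}(\Phi^{\star})\geq 0$, which is exactly (\ref{inequPrM1}).

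The main obstacle will be rigorously justifying the It\^o formula for the product $(v^{\star}(t),P(t))$: $P$ lives in $V$ almost everywhere in $t$ while $v^{\star}$ is only $H$-valued with $V$-integrability coming from the adjoint estimate (\ref{estimequM}), so the pairing of the $A^{\star}$, $B_u$ and $\Xi_u$ duality terms must be handled carefully. The natural way to do this is via the Galerkin approximations $(v^{\Phi,M})_n$ and $P^M_n$ already constructed in the proofs of Theorem \ref{TEUC} and the preceding proposition; for those smooth finite-dimensional approximations the product rule is classical, and the cancellations described above are exact. One then passes to the limit $n\to\infty$ and $M\to\infty$ using the weak convergences established earlier together with the uniform bounds, exactly as in the concluding part of the proof of Theorem \ref{BSExUn}. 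The rest of the argument (rewriting via the Hamiltonian and invoking optimality) is algebraic and should cause no further difficulty.
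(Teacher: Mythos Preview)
Your final two moves --- replacing $\mathcal{K}'(u^{\star}(T))$ by $v^{\star}(T)$ via the terminal condition of (\ref{BSE}), rewriting $\mathcal{L}(u^{\star},\Phi_{\epsilon})-\mathcal{L}(u^{\star},\Phi^{\star})$ through the Hamiltonian, and then invoking $\mathcal{J}(\Phi_{\epsilon})-\mathcal{J}(\Phi^{\star})\geq 0$ --- are exactly the paper's proof, and they suffice on their own. The paper's argument is literally three lines: optimality, the expansion (\ref{estPrinMax}), and the definition (\ref{hamilt}) of $\mathcal{H}$ together with $v^{\star}(T)=\nabla\mathcal{K}(u^{\star}(T))$.

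The entire It\^o--product computation you carry out for $(v^{\star}(t),P(t))$ is therefore unnecessary for this lemma. Notice that the inequality (\ref{inequPrM1}) still contains the terms $\epsilon\mathbb{E}(v^{\star}(T),P(T))$ and $\epsilon\mathbb{E}\int_0^T(\mathcal{L}_u,P)\,dt$ intact; nothing has been converted into a $B_{\Phi}$ integral. What you have in fact derived with the product rule is the identity
\[
\mathbb{E}(v^{\star}(T),P(T))=-\mathbb{E}\!\int_0^T(\mathcal{L}_u(u^{\star},\Phi^{\star}),P)\,dt+\mathbb{E}\!\int_0^T(v^{\star},B_{\Phi}(u^{\star},\Phi^{\star})\Phi)\,dt,
\]
which is precisely the content of the \emph{next} lemma (equation (\ref{inequPrM2})) in the paper, proved there separately by reference to \cite{AH2}. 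So your proposal is correct but front-loads the work of the following lemma; for the present statement you can delete the It\^o computation and the Galerkin limiting argument entirely and keep only the last paragraph.
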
 
\proof  Let us  first observe that $\mathcal{J}(\Phi_{\epsilon})-\mathcal{J}(\Phi^{\star})\geq0$ because $\Phi^{\star}$ is an optimal control. The proof follows from this observation, (\ref{estPrinMax}) and (\ref{hamilt}).
\cqd
\begin{lemma} Suppose that {\bf A1}-{\bf A12} are satisfied. We have
\begin{equation}\label{inequPrM2}\begin{array}{rl}
\mathbb{E}\left(v^{\star}(T),P(T)\right)&=-\mathbb{E}\left[\int_0^T \mathcal{L}_u(u^{\star}(t),\Phi^{\star}(t))P(t)dt 
\right]+\\
&\mathbb{E}\left[\int_0^T (v^{\star}(t),B_\Phi(u^{\star}(t),\Phi^{\star}(t))\Phi(t)dt
\right].
\end{array}
\end{equation}
\end{lemma}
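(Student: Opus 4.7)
The identity is a duality pairing between the forward linearised equation (\ref{linequ}) for $P$ and the adjoint backward equation (\ref{BSE}) for $(v^{\star},Z^{\star})$, so the natural plan is to apply It\^o's product formula to $t\mapsto (v^{\star}(t),P(t))$, show that the coefficient of $dt$ collapses to exactly the two terms appearing on the right-hand side, and then take expectations to kill the martingale parts. With $P(0)=0$ and $v^{\star}(T)=\nabla\mathcal{K}(u^{\star}(T))$, the boundary contribution equals the left-hand side $\mathbb{E}(v^{\star}(T),P(T))$.

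The formal computation I would carry out is as follows. Using the dynamics of $P$ in (\ref{linequ}) and the BSPDE form (\ref{BSER}) for $v^{\star}$, together with the cross-variation
$$
d\langle v^{\star},P\rangle_t=(Z^{\star}(t),\Xi'(u^{\star}(t))P(t))_2\,dt,
$$
It\^o's product rule gives three pairs of terms that must cancel and two that survive. First, $-\langle A^{\star}v^{\star},P\rangle+\langle v^{\star},A(t)P\rangle=0$ by definition of the adjoint. Second, since $B(\cdot,\Phi)$ is Fr\'echet differentiable (\textbf{A6}), one has the duality relation $(\nabla_u\langle B(u^{\star},\Phi^{\star}),v^{\star}\rangle,P)=\langle B_u(u^{\star},\Phi^{\star})P,v^{\star}\rangle$, so the $B_u$ contributions from the two equations cancel. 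Third, by \textbf{A5} the analogous identity $(\nabla_u\langle\Xi(u^{\star}),Z^{\star}\rangle_2,P)=(Z^{\star},\Xi'(u^{\star})P)_2$ cancels the cross-variation term against the $\nabla_u\langle\Xi,Z^{\star}\rangle_2$ term in the BSPDE. What remains inside the $dt$ integral is exactly $-(\mathcal{L}_u(u^{\star},\Phi^{\star}),P)+(v^{\star},B_\Phi(u^{\star},\Phi^{\star})\Phi)$, and after integrating on $[0,T]$ and taking expectation the stochastic integrals vanish thanks to the $L^2$-bounds $\mathbb{E}\int_0^T\|P\|_V^2\,dt<\infty$, $\mathbb{E}\int_0^T\|v^{\star}\|_V^2\,dt<\infty$ and $\mathbb{E}\int_0^T\|Z^{\star}\|_2^2\,dt<\infty$ obtained from Theorem~\ref{TEUC} and Theorem~\ref{BSExUn}.

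The actual obstacle is justifying this It\^o formula rigorously. The forward process $P$ lives in $V$ while $v^{\star}$ is only $H$-valued in the natural pairing, so the product $(v^{\star}(t),P(t))$ is outside the scope of the standard Gelfand-triple It\^o formula applied to a single process. My plan is to do the computation at the approximating level, using the Galerkin/localised pairs $(P_n^M,v_n^{\Phi^{\star},M})$ for which everything is finite-dimensional and It\^o's formula is classical, then pass to the limit $n\to\infty$ using the weak convergences (\ref{weakconv2}) and (\ref{weakconv4}) together with the convergences of $(B_u)_n$ and $(B_{\Phi})_n$ already established in the proof of Theorem~\ref{TEUC} (and their adjoint-type analogues used in the proof of Theorem~\ref{BSExUn}), and finally $M\to\infty$ exactly as in the stitching argument via the stopping times $\mathcal{T}_M$ used in both earlier theorems.

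In the limit, the boundary term at $T$ converges by the $V$-valued weak convergence of $P_n^M$ and the strong $H$-continuity of the trajectories together with the terminal condition $v^{\Phi,M}(T)=\nabla\mathcal{K}(u^{\Phi}(T))$; the $\mathcal{L}_u$ term converges by \textbf{H2} and dominated convergence; the $B_\Phi$ term converges by \textbf{A11}. The only delicate point is the passage to the limit in the cancelling $\Xi$-terms, which must be handled simultaneously so that the cross-variation term and the $\nabla_u\langle\Xi,Z^{\star}\rangle_2$ term cancel identically at each finite $n$ before taking limits; this is made possible precisely by the truncations $(\Xi')_n=\Pi_n\Xi'$ and $(\nabla_u)_n=\Pi_n\nabla_u$ used in (\ref{linaprostop}) and (\ref{BSERn}), which are chosen compatibly. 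Once the identity holds for each $M\geq M_0(\omega)$, the convergence $P^M\to P$ and $v^{\star,M}\to v^{\star}$ on $\{\mathcal{T}_M=T\}$ established in the proofs of Theorems~\ref{TEUC} and~\ref{BSExUn} yields the claimed equality.
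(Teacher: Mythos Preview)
Your proposal is correct and follows the standard duality argument: apply It\^o's product rule to $(v^\star(t),P(t))$, use the adjoint relations to cancel the $A$, $B_u$, and $\Xi'$ contributions against their counterparts (the latter against the cross-variation), then take expectations. The paper does not spell this out but simply refers to \cite{AH2}, Lemma~6.2, whose proof is precisely this computation; your Galerkin/localisation justification is in the same spirit as the approximation machinery already developed in Sections~3 and~4.
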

\proof See \cite{AH2} Lemma 6.2.
\cqd
\begin{theorem}Suppose that {\bf A1}-{\bf A12}, {\bf H1} and {\bf H2} are satisfied. Given an optimal pair $(u^{\star},\Phi^{\star}).$ Then, there is a unique pair $(v^{\star},Z^{\star})$ solving the corresponding adjoint BSE (\ref{BSE}) such that the following variational inequality is satisfied
\begin{equation}\label{varioequa}
\left( \nabla_{\Phi}\mathcal{H}(u^{\star}(t),\Phi^{\star}(t),v^{\star}(t),Z^{\star}(t)),\Phi^{\star}-\Phi \right)_{\mathcal{O}}\leq 0,
\end{equation}
for all $\Phi\in \mathcal{O}_1,$ a.e. $t\in\mathbb{T},$ a.s.
\end{theorem}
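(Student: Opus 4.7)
\proof The plan is to combine the two preceding lemmas and then localize to recover the pointwise variational inequality. Substituting the identity for $\mathbb{E}(v^{\star}(T),P(T))$ from (\ref{inequPrM2}) into (\ref{inequPrM1}), the term $\epsilon\mathbb{E}\int_0^T(\mathcal{L}_u(u^{\star}(t),\Phi^{\star}(t)),P(t))dt$ cancels, leaving
$$
\epsilon\mathbb{E}\!\!\int_0^T\!(v^{\star},B_\Phi(u^{\star},\Phi^{\star})\Phi)dt+\mathbb{E}\!\!\int_0^T\!\!\bigl[\mathcal{H}(u^{\star},\Phi_{\epsilon},v^{\star},Z^{\star})-\mathcal{H}(u^{\star},\Phi^{\star},v^{\star},Z^{\star})\bigr]dt-\mathbb{E}\!\!\int_0^T\!(v^{\star},B(u^{\star},\Phi_{\epsilon})-B(u^{\star},\Phi^{\star}))dt\geq o(\epsilon).
$$
Since $\Xi$ does not depend on the control, the definition (\ref{hamilt}) of $\mathcal{H}$ gives
$$
\mathcal{H}(u^{\star},\Phi_{\epsilon},v^{\star},Z^{\star})-\mathcal{H}(u^{\star},\Phi^{\star},v^{\star},Z^{\star})=\mathcal{L}(u^{\star},\Phi_{\epsilon})-\mathcal{L}(u^{\star},\Phi^{\star})+\langle B(u^{\star},\Phi_{\epsilon})-B(u^{\star},\Phi^{\star}),v^{\star}\rangle,
$$
so the $B$-terms cancel and only the $\mathcal{L}$-difference survives.

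Next, I would divide the resulting inequality by $\epsilon$ and let $\epsilon\rightarrow 0^{+}$. By the Fr\'echet differentiability of $\mathcal{L}$ in $\Phi$, the bound $\|\mathcal{L}_{\Phi}(u,\Phi)\|\leq k_{\mathcal{L}}\|\Phi\|_{\mathcal{O}}$ from {\bf H2}, and the Lipschitz estimate in {\bf H1}, the dominated convergence theorem applies and yields
$$
\mathbb{E}\int_0^T\bigl[(v^{\star}(t),B_\Phi(u^{\star}(t),\Phi^{\star}(t))\Phi(t))+(\mathcal{L}_{\Phi}(u^{\star}(t),\Phi^{\star}(t)),\Phi(t))_{\mathcal{O}}\bigr]dt\geq 0.
$$
By (\ref{hamilt}), the bracketed integrand equals $(\nabla_{\Phi}\mathcal{H}(u^{\star}(t),\Phi^{\star}(t),v^{\star}(t),Z^{\star}(t)),\Phi(t))_{\mathcal{O}}$. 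For any $\Psi\in\mathcal{U}$, the perturbation direction $\Phi:=\Psi-\Phi^{\star}$ is admissible because $\mathcal{O}_1$ is convex, so $\Phi^{\star}+\epsilon(\Psi-\Phi^{\star})\in\mathcal{O}_1$ for $\epsilon\in[0,1]$. Substituting this choice produces the integrated variational inequality
$$
\mathbb{E}\int_0^T(\nabla_{\Phi}\mathcal{H}(u^{\star},\Phi^{\star},v^{\star},Z^{\star}),\Phi^{\star}-\Psi)_{\mathcal{O}}dt\leq 0,\qquad\Psi\in\mathcal{U}.
$$

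The last step is to pass from this integrated inequality to the pointwise one. I would use the standard localization: for any $\Phi_{0}\in\mathcal{O}_1$ and any progressively measurable set $A\subseteq\mathbb{T}\times\Omega$, set $\Psi:=\Phi_{0}\mathbf{1}_{A}+\Phi^{\star}\mathbf{1}_{A^{c}}$, which lies in $\mathcal{U}$. Testing the integrated inequality against such $\Psi$ and using the arbitrariness of $A$ together with a Lebesgue differentiation / conditional-expectation argument yields $(\nabla_{\Phi}\mathcal{H}(u^{\star}(t),\Phi^{\star}(t),v^{\star}(t),Z^{\star}(t)),\Phi^{\star}(t)-\Phi_{0})_{\mathcal{O}}\leq 0$ for a.e.\ $t$, a.s. Since $\mathcal{O}$ is separable, we may take a countable dense collection of $\Phi_{0}\in\mathcal{O}_1$ and obtain the inequality for all $\Phi\in\mathcal{O}_1$ off a common null set. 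The existence and uniqueness of $(v^{\star},Z^{\star})$ are already furnished by Theorem \ref{BSExUn}. The main obstacle I expect is the final localization, specifically ensuring that the integrability of $\nabla_{\Phi}\mathcal{H}$ (which requires the $L^{2}$ bounds on $v^{\star}$, $Z^{\star}$ from Section 4 and the bound on $\mathcal{L}_{\Phi}$ in {\bf H2}) is strong enough to run the measurable-selection argument; this is where the estimate (\ref{estimequM}) and the growth of $B_{\Phi}$ from {\bf A11} must be carefully combined.
\cqd
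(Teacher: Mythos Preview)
Your proof is correct and follows essentially the same route as the paper: combine the two lemmas, pass to the limit $\epsilon\to 0$ to obtain the integrated inequality $\mathbb{E}\int_0^T(\nabla_{\Phi}\mathcal{H},\Phi)_{\mathcal{O}}dt\geq 0$, and then localize as in \cite{AH2}. The only organizational difference is that you observe the $\langle B(u^{\star},\Phi_{\epsilon})-B(u^{\star},\Phi^{\star}),v^{\star}\rangle$ term in the Hamiltonian increment cancels \emph{exactly} against the third term, whereas the paper keeps the Hamiltonian increment intact and instead shows that the combination $\epsilon\,\mathbb{E}\int(v^{\star},B_{\Phi}\Phi)dt-\mathbb{E}\int(v^{\star},B(u^{\star},\Phi_{\epsilon})-B(u^{\star},\Phi^{\star}))dt$ is $o(\epsilon)$ via {\bf A11} and dominated convergence; your shortcut is a mild simplification but leads to the same limiting inequality.
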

\proof Given $(u^{\star},\Phi^{\star})$ an optimal pair, thanks to the Theorem \ref{BSExUn}, there is a unique solution $(v^{\star},Z^{\star})$ to (\ref{BSE}).\\
Now, we will demonstrate (\ref{varioequa}). From (\ref{inequPrM1}) and  (\ref{inequPrM2}) we have
\begin{equation}\begin{array}{rl}
\epsilon\mathbb{E}&\left[\int_0^T (v^{\star}(t),B_\Phi(u^{\star}(t),\Phi^{\star}(t))\Phi(t))dt
\right]+\\
&\mathbb{E}\left[\int_0^T\mathcal{H}(u^{\star}(t),\Phi_{\epsilon}(t),v^{\star}(t),Z^{\star}(t))-\mathcal{H}(u^{\star}(t),\Phi^{\star}(t),v^{\star}(t),Z^{\star}(t)) dt\right]-\\
&\mathbb{E}\left[\int_0^T (v^{\star}(t),B(u^{\star}(t),\Phi_{\epsilon}(t))-B(u^{\star}(t),\Phi^{\star}(t)))dt\right]\geq o(\epsilon)
\end{array}
\end{equation}
Furthermore, by using the continuity and boundedness of $B_{\Phi}$ in {\bf A11} and the dominated convergence Theorem, we have
$$\begin{array}{rl}
\frac{1}{\epsilon}\!\!\!\!&\mathbb{E}\left[\int_0^T(v^{\star}(t),-(B(u^{\star}(t),\Phi_{\epsilon}(t))-B(u^{\star}(t),\Phi^{\star}(t)))+\right.\\
&\left.\epsilon B_\Phi(u^{\star}(t),\Phi^{\star}(t)),\Phi(t))dt\right]=\\
&\mathbb{E}\left[\int_0^T(v^{\star}(t),-\int_0^1(B_{\Phi}(u^{\star}(t),\Phi^{\star}(t)+r(\Phi_{\epsilon}(t)-\Phi^{\star}(t)),\Phi(t)))\right.dr+\\
&\left. B_\Phi(u^{\star}(t),\Phi^{\star}(t))(\Phi(t))dt\right]\rightarrow0, \textmd{ as }\epsilon\rightarrow0.
\end{array}
$$
Therefore,
$$
\mathbb{E}\left[\int_0^T\mathcal{H}(u^{\star}(t),\Phi_{\epsilon}(t),v^{\star}(t),Z^{\star}(t))-\mathcal{H}(u^{\star}(t),\Phi^{\star}(t),v^{\star}(t),Z^{\star}(t)) dt\right]\geq o(\epsilon).
$$
Hence, dividing by $\epsilon$ and letting  $\epsilon\rightarrow0$ we obtain
$$
\mathbb{E}\left[\int_0^T(\nabla_{\Phi}\left(u^{\star}(t),\Phi^{\star}(t),v^{\star}(t),Z^{\star}(t)),\Phi(t) \right)_{\mathcal{O}}dt\right]\geq0.
$$
Finally, (\ref{varioequa}) follows by arguing as in (\cite{AH2}).
\cqd

\end{document}